\documentclass[12pt,english]{amsart}
\usepackage[]{fontenc}
\usepackage[latin1]{inputenc}
\usepackage{geometry}
\geometry{verbose,a4paper,tmargin=3.2cm,bmargin=2.8cm,%
lmargin=3.2cm,rmargin=3.2cm,headheight=5mm,headsep=6mm,footskip=2mm}
\usepackage{fancyhdr}
\pagestyle{fancy}
\usepackage{graphicx}
\usepackage{setspace}
\setstretch{1.1}

\makeatletter
%%%%%%%%%%%%%%%%%%%%%%%%%%%%%% Textclass specific LaTeX commands.
 \theoremstyle{plain}
\newtheorem{thm}{Theorem}[section]
  \theoremstyle{remark}
  \newtheorem{rem}[thm]{Remark}
  \theoremstyle{plain}
  \newtheorem{cor}[thm]{Corollary}
  \theoremstyle{plain}
  \newtheorem{prop}[thm]{Proposition}
  \theoremstyle{plain}
  \newtheorem{lem}[thm]{Lemma}
  \theoremstyle{plain}
  \newcounter{clm}[thm]
  
  \theoremstyle{plain}
  \newtheorem{dfn}[thm]{Definition}
  \theoremstyle{remark}
  \newtheorem{ex}[thm]{Example}

%%%%%%%%%%%%%%%%%%%%%%%%%%%%%% User specified LaTeX commands.
\usepackage[all]{xy}
\usepackage{amssymb}
\usepackage{pstricks}
\usepackage{pst-node}
\usepackage{pst-plot}

\def\al{\alpha}

\def\ga{\gamma}
\def\de{\delta}

\def\la{\lambda}
\def\La{\Lambda}
\def\si{\sigma}
\def\th{\vartheta}

\def\ze{\zeta}
\def\om{\omega}
\def\Om{\Omega}
\def\dd{{\mathrm d}}
\def\e{{\mathrm e}}
\def\ii{{\mathrm i}}
\def\t{\,{}^{t}\!}
\def\C{{\mathbb C}}

\def\DD{{\mathbb D}}

\def\H{{\mathbb H}}
\def\N{{\mathbb N}}

\def\P{{\mathcal P}}
\def\Q{{\mathcal Q}}
\def\QQ{{\mathbb Q}}
\def\R{{\mathbb R}}
\def\S{{\mathcal S}}
\def\T{{\mathbb T}}
\def\Z{{\mathbb Z}}
\def\pii{\pi\mathrm{i}}
\def\im{\mathrm{Im}\,}

\def\Pic{\mathrm{Pic}}

\def\Pol{\mathrm{Pol}}

\newcommand{\clth}[2]{\vartheta\left[\begin{array}{@{}c@{}}
      #1 \\ #2 \end{array}\right]}
\def\tns{\otimes}

\def\ol{\overline}

\newcommand{\pd}[2]{\frac{\partial #1}{\partial #2}}

\newcommand{\cdiag}[1]{\begin{array}{c} \xymatrix{ #1 } \end{array}}
\def\mod{\rm{mod}\,}

\newcommand{\be}{\begin{equation}}
\newcommand{\ee}{\end{equation}}
\newcommand{\ba}{\begin{eqnarray}}
\newcommand{\ea}{\end{eqnarray}}
\def\nn{\nonumber}

\def\sign{\mathrm{sign}\,}
\newcommand{\vect}[1]{\left[\begin{array}{@{}c@{}} #1 \end{array}\right]}
\newcommand{\mtrx}[1]{\left[\begin{array}{@{}cc@{}} #1 \end{array}\right]}

\lhead{}
\chead{\small\textsc{Quantization of Abelian Varieties}}
\rhead{\thepage}

\cfoot{}

\usepackage{babel}
\makeatother
\begin{document}

\setcounter{clm}{0}
\renewcommand{\theclm}{\arabic{clm}}

\title{Quantization of Abelian Varieties: distributional sections and the transition from K\"ahler
to real polarizations}

\author{Thomas Baier, Jos\'e M. Mour\~ao and Jo\~ao P. Nunes}

\email{tbaier, jmourao, jpnunes@math.ist.utl.pt}

\address{Department of Mathematics, Instituto Superior T\'{e}cnico, Av. Rovisco
Pais, 1049-001 Lisboa, Portugal}

\begin{abstract}
We study the dependence of geometric quantization of the standard symplectic
torus on the choice of invariant polarization. Real and mixed polarizations
are interpreted as degenerate complex structures. Using a weak version of the
equations of covariant constancy, and the Weil-Brezin expansion to describe
distributional sections, we give a unified analytical description of the
quantization spaces for all nonnegative polarizations.

The Blattner-Kostant-Sternberg (BKS) pairing maps between half-form corrected 
quantization spaces for different polarizations are shown to be transitive and 
related to an action of $Sp(2g,\R)$. Moreover, these maps are shown to be
unitary.

\end{abstract}

\maketitle

\begin{center}
\today
\end{center}

\tableofcontents{}

\section{Introduction}

Abelian varieties provide a very rich example of interplay between 
ideas of algebraic geometry and of geometric quantization. A general problem of great 
interest in the geometric quantization of a large class of symplectic manifolds 
is the dependence of quantization on the complex structure \cite{AdPW,Hi}. (See also \cite{An}.)

As is well known, the holomorphic quantization of a symplectic torus, once it is equipped with 
the structure of abelian variety, produces the space of theta functions. 
In this case, quantizations in different complex structures yield
spaces of theta functions which are naturally unitarily equivalent.
This equivalence between holomorphic quantization spaces has been obtained in \cite{AdPW}
by means of the parallel transport induced by a heat equation. See also 
\cite {FMN1,FMN2} for an intrinsically finite-dimensional approach.

In this work, we consider the geometric quantization 
of the standard symplectic torus of dimension $2g$, $(\T^{2g},\om)$, in 
holomorphic and real nonnegative invariant polarizations and for any level $k\in\N$. 
Due to the group structure of $\T^{2g}$, and consequent triviality of the
tangent bundle, the space of invariant polarizations is canonically identified with the 
nonnegative Lagrangian Grassmannian of a fixed symplectic vector space, $\C^{2g}$.
Thus, one obtains some similarities with the geometric quantization of a vector space \cite{AdPW,KW}.
For instance, below we construct a Blattner-Kostant-Sternberg (BKS) pairing, 
between quantization spaces associated with invariant nonnegative polarizations, which
is transitive, once the half-form correction is included.

The main novel point in our approach is the treatment of the equations of covariant 
constancy of wave functions, for real and holomorphic polarizations, from a unified
analytical stand point, considering the operators of covariant derivation to act on
distributional sections of the prequantum line bundle. For real polarizations this
yields, as expected, distributional sections supported on Bohr-Sommerfeld fibers; 
still, it does not always produce the same result as the more traditional cohomological
wave function approach to quantization of \cite{Sn}, as can be seen in the case of
toric manifolds comparing \cite{BFMN} and \cite{Ham}.
For previous related work on the geometric quantization of abelian varieties see \cite{An}, 
where, in particular, it is explicitly shown that the quantization spaces for any 
two invariant real polarizations are isomorphic.

In the language of geometric quantization, the space of invariant complex structures
corresponds to the space of positive Lagrangian subspaces, which is usually identified
with Siegel upper-half space $\H_g$. To allow the study of the dependence of
quantization on the complex structure, including the case of degenerate complex
structures on the boundary of $\H_g$, it is convenient to consider also another chart
that uses the closed Siegel
disc, which we denote by $\DD_g$. In this way, we obtain a global parameter also for the
nonpositive definite part (that becomes identified with $\partial\DD_g$) of the
Lagrangian Grassmannian.

A crucial contribution for the nice behaviour of quantization (or rather of the relation
between any two quantizations) on the boundary of the upper-half space is given by the
half-form correction. As in many other examples in geometric quantization \cite{Wo}, the
half-form is necessary for the unitarity of the pairing maps relating quantizations in
different polarizations.

Symmetry groups play an important role in this work. On one hand, the BKS pairing between
different invariant polarizations is intimately related with the 
action of $Sp(2g,\R)$ on $\DD_g$. This action does not represent a geometric 
symmetry of the manifold we are quantizing, but has, instead, an analytical flavour 
connected to the natural representation of the metaplectic group on $L^2(\R^g)$.
Also, one of the most interesting feature of the geometric quantization of $\T^{2g}$, that
is not present in the linear case  $\R^{2g}$ (see \cite{KW}), besides the appearance of
distributional sections of quite different nature over different degenerate Lagrangian
subspaces, is the interplay with natural invariance groups of certain data of the
prequantization of the torus. Namely, there are natural 
geometric actions of the integer symplectic group, $Sp(2g,\Z)$, and of 
the group $(\Z/k\Z)^{2g}$ on $\T^{2g}$ that leave the holonomies of the prequantum line bundle
representing $k\om$ invariant. While the latter gives rise to the finite
Heisenberg group when lifted to the line bundle, the former gives origin, for
non-degenerate complex structures, to the classical algebro-geometric theta-transformation
formula (see, for example, \cite{BL,Po,Ke}). We will address some of these issues in \cite{BMN}.

In \cite{Ma}, Manoliu shows (for even level $k$) that the BKS pairing maps relating two
reducible real polarizations is unitary if one takes into account the half-form correction.
In this work, where we consider arbitrary level $k$, we obtain an analogous 
result for all
holomorphic polarizations, while the real polarizations are included as limiting cases on the
boundary of the space of complex structures.

Abelian varities, therefore, give one more family of symplectic manifolds, in 
addition to non-compact complex Lie groups (see \cite{Hal,FMMN}), where half-form
corrected holomorphic quantizations in different complex structures, including real
polarizations as degenerate cases, can be related by unitary BKS pairing maps \cite{Ra,Wo}.

The paper is organized as follows. After some preliminaries in Section \ref{av_prelim},  
we use the $Sp(2g,\R)$ action in Section \ref{av_dcqb} to give a unified analytical description of half-form 
quantization 
in complex, real, or mixed polarizations, resulting in Theorem \ref{av_teorema}. 
In Section \ref{avbks}, we show that the BKS pairing maps are unitary and transitive.
Section \ref{tropic} contains a brief illustration of how tropical geometry can be seen to emerge
as the complex struture degenerates.

\section{Preliminaries on the prequantum line bundle and its sections}

\label{av_prelim}

\subsection{The prequantum line bundle $L$}

Let $(\T^{2g}=\R^{2g}/\Z^{2g},\om)$ be the standard even-dimensional torus, 
with periodic coordinates $(x,y)$ with $x,y\in \R^g$, and the invariant
symplectic form given by $\om = \sum_{i=1}^g \dd y_i \wedge \dd x_{i}$. 
Consider a prequantization of $\T^{2g}$ given by the line bundle $L$,
representing the cohomology class of the symplectic form, with Hermitian
structure and a compatible connection with curvature $-2\pii \om$. $L$
is defined by  $L=\R^{2g}\times_{\Z^{2g}}\C$, where the $\Z^{2g}$ action is
\be
\label{av_autom}
\la\cdot (u,\ze) = \left( u+\la,\al(\la)\e^{-\pii \om(u,\la)}\ze \right) ,\quad
 u=\vect{x\\y}\in\R^{2g}, \la\in \Z^{2g},
\ee
where $\al:\Z^{2g}\to \{\pm 1\} \subseteq U(1)$ is the so-called ``canonical''
semi-character
\[
 \al(\la) = (-1)^{\sum_{j=1}^g \la_j \la_{g+j}}
\]
and $\om$ is identified with symplectic bilinear form
\[
 \om\left(u,v\right) = \sum_{j=1}^g (u_{g+j}v_j-u_j v_{g+j}) 
 = \t u \mtrx{0 &-I\\I&0} v .
\]

The Hermitian structure and connection are defined to be,
\be
\label{av_herm}
 h((u,\ze),(u,\ze')) = \ze\ol{\ze'}
\ee
and
\be
\label{av_conn}
 \nabla s = \dd s - \pii s\,\sum_{j=1}^g \left( y_{j}\dd x_j
 - x_j\dd y_{j} \right) ,
\ee
respectively. Here and below, global sections of $L$ 
(for example smooth ones, $s\in \Gamma^\infty(L)$) will be identified 
with sections of the (trivialized) pull-back of $L$ to $\R^{2g}$, that is, with 
functions on $\R^{2g}$ that satisfy the appropriate quasi-periodicity
conditions,
\[
 s(u+\la) = \al(\la)\e^{-\pii \om(u,\la)}s(u),\quad \la\in \Z^{2g} .
\]
Since the trivialization of the bundle is unitary and the connection
1-form purely imaginary, the connection is Hermitian.

\subsection{Spaces of sections of $L^k$}

If $s$ is a smooth section of $L^k$, then $$\e^{k\pii \t x y}s(x,y)$$ is periodic
in $x$ and hence admits a Fourier expansion. This observation leads to an
isomorphism given by the Weil-Brezin expansion (also known as Zak expansion)
\cite{F} given by,
\begin{eqnarray} \label{av_wb}
 \Gamma^\infty(L^k) & \longrightarrow & \prod_{l\in(\Z/k\Z)^g} \S(\R^g)  \\
 s & \mapsto & (s)_l(y) := \int_{[0,1]^g} s(x,y+\frac{l}{k}) \e^{k\pii \t x(y+\frac{l}{k})}
 \e^{-2\pii \t l x} \dd^g x \nn
\end{eqnarray}
with inverse
\begin{eqnarray}
\label{av_wb2}
 \prod_{l\in(\Z/k\Z)^g} \S(\R^g) & \longrightarrow & \Gamma^\infty(L^k)  \\
 \left\{ (s)_l \right\}_{l\in (\Z/k\Z)^g} & \mapsto & s(x,y) := \e^{-k\pii \t x y}
 \sum_{l\in (\Z/k\Z)^g} \sum_{m\in \Z^g} (s)_l(y-m-\frac{l}{k}) \e^{2\pii \t (km+l) x} \nn
\end{eqnarray}
where $0\leq l_j < k$. We will use the round bracket notation $(s)_l$ for the Weil-Brezin
coefficients of a section $s$ throughout the paper.

The Weil-Brezin map is an isomorphism between topological vector spaces (of smooth
sections and Schwartz functions, respectively), hence it extends to an isomorphism
between the dual spaces, that is between the space of distributional sections of
the bundle $L^k$ and a product of $k^g$ copies of the space of tempered
distributions $\S'(\R^g)$. This map in turn restricts to a unitary isomorphism
$\prod_{l\in (\Z/k\Z)^g} L^2(\R^g) \leftrightarrow \Gamma_{L^2}(L^k)$, with
\be
 \label{av_unit}
 \langle s,s'\rangle = \sum_{l\in (\Z/k\Z)^g} \langle (s)_l,(s')_l \rangle = 
 \sum_{l\in (\Z/k\Z)^g} \int_{\R^{g}} (s)_l \ol{(s')_l}.
\ee

\subsection{Invariant complex structures and theta functions}

Invariant complex structures on $\T^{2g}$ are determined by their restriction
to the tangent space at any point and hence can be parametrized by matrices in
Siegel upper half-space $\H_g$ (see, for instance, \cite{Ke}). The complex
structure on $\T^{2g}$ can be described by its lift to the universal cover
$\R^{2g}$: for any $\Om\in\H_g$, let $\Lambda_\Om$ be the lattice $\Z^{g}\oplus
\Om \Z^{g}\subset\C^g$, so that the torus is equipped with the structure of an
Abelian variety via the smooth isomorphism $\phi_\Om:\T^{2g}\to X_\Om :=
\C^g/\Lambda_\Om$ induced by ($x,y\in\R^g$)
\be
\label{av_realiso}
 \R^{2g}\ni u = (x,y) \mapsto z_\Om := x-\Om y \in \C^g .
\ee
(The choice of the sign here, as well as for the symplectic form $\om$ were made
so that the action of the symplectic group turns out to be the expected one in
all the coordinates). In the coordinates $x,y$, the complex structure takes the
form
\be\label{av_JOm}
 J_\Om = \mtrx{ -\Om_1\Om_2^{-1} & \Om_1\Om_2^{-1}\Om_1+\Om_2 \\
                -\Om_2^{-1} & \Om_2^{-1}\Om_1 } , \quad
 \Om = \Om_1 + \ii \Om_2,
\ee
but we will need this only in the last Section.
The corresponding holomorphic structure on $(L,\nabla)$ is given by the action
of the lattice $\Lambda_\Om$ on $\C^g\times \C$ obtained by combining
(\ref{av_autom}), (\ref{av_realiso}) and the isomorphism
\[
 \R^{2g}\times\C\ni (u,\ze) = ((x,y),\ze) \mapsto (x-\Om y,
 \e^{\pii F_\Om(z_\Om,z_\Om)}\ze) \in \C^g\times\C.
\]
We will denote this holomorphic line bundle by
$L_{\Om}:=\C^g\times_{\Lambda_\Om}\C$. Here $F_\Om$ is the bilinear
form
\[
 F_\Om(z,w) = -\t z(\im\Om)^{-1}\im w
\]
or, in terms of real coordinates, $z=x-\Om y$, $w=u-\Om v$,
\[
 F_\Om(x-\Om y,u-\Om v) = \t(x-\Om y)v = \t x v - \t y \Om v .
\]
The quasi-periodicity condition for functions on $\C^g$ to define
holomorphic sections of $L_\Omega$ takes the well-known form 
corresponding to classical theta functions,
\[
 \th(z+\la) = \al(\la)\e^{2\pii F_\Om(z,\la)+\pii
   F_\Om(\la,\la)}\th(z),\,\,\,\lambda\in \Lambda_\Om.
\]

It is well known (see, for example, \cite{Ke,Po}) that a basis of
$H^0(X_\Om,L_{\Om}^k)$ is given by
\be \label{av_classic}
 \left\{\clth{\frac{l}{k}}{0}(kz,k\Omega)\right\}_{l\in\Z^g/k\Z^g},
\ee
which corresponds to the sections
\ba
 \th^l_\Om(x,y) & := & \e^{-k\pii F_\Om(x-\Om y,x-\Om y)}
 \clth{\frac{l}{k}}{0}(kx-k\Om y,k\Om) \ = \nonumber \\
 \label{av_thetas} 
 & = & \e^{-k\pii\t x y} \sum_{m\in\Z^g}
 \e^{k\pii\t(y-m-\frac{l}{k})\Om(y-m-\frac{l}{k}) +
   k2\pii\t(m+\frac{l}{k})x} 
\ea
of $L^k$. Note that the Weil-Brezin coefficients of these
sections are the Gaussians
$$
(\th^l_\Om)_{l'}(y)= \delta_{l l'} \e^{k\pii \t y\Omega y}.
$$

\subsection{Invariant polarizations}

\label{av_tau}
Geometric quantization (for example, of the symplectic torus) can be performed
not only using the K\"ahler structures from the last paragraph, but using the
more general notion of a polarization. Here, we describe all invariant
polarizations on $\T^{2g}$.

Let $L\T^{2g}\to \T^{2g}$ be the bundle of nonnegative Lagrangian subspaces of
the complexified tangent bundle on the symplectic manifold $(\T^{2g},\omega)$.
A (nonnegative) polarization, in the sense of geometric quantization (see, for
instance, \cite{Wo}), is a section of $L\T^{2g}$ that provides an involutive
distribution on $\T^{2g}$,
\[
 \Pol(\T^{2g}) \subset \Gamma^\infty(L\T^{2g}) .
\]
In the present case, $L\T^{2g}$ is canonically trivial
\[
 L\T^{2g} \cong \T^{2g}\times L \C^{2g} ,
\]
(where $L \C^{2g}$ denotes the Grassmannian of nonnegative Lagrangian subspaces
in $\C^{2g}$ equipped with the standard symplectic form), since $T\T^{2g}$ is
(canonically) trivial. A polarization $\P$, therefore, is simply a function
$\P:\T^{2g} \to L\C^{2g}$, and it is invariant if this function is constant,
\[
 \Pol^{\T^{2g}}(\T^{2g}) \cong L\C^{2g} .
\]

The complex structures from the previous section, parametrized by matrices $\Om\in\H_g$
in Siegel upper half-space via the coordinates $z_\Om=x-\Om y$, correspond to the positive
(or K\"ahler) polarizations $L^+\C^{2g}$ in $\Pol^{\T^{2g}}(\T^{2g})$,
\[
 \P_\Om = \mathrm{span}\left\{\frac{\partial}{\partial
   {z}_\Om^j} \right\}_{j=1,\dots, g} =
 \mathrm{span}\left\{\sum_l \left( \ol{\Om}_{l k}\pd{}{x_l}+\pd{}{y_k} \right)
 \right\}_{k=1,\dots, g} .
\]

\begin{rem} Notice that the positive Lagrangian Grassmannian bundle $L^+\T^{2g}
\cong \T^{2g}\times\H_g$ carries also a natural complex structure: equipping every
fiber $\T^{2g}\times\{\Om\}$ with the complex structure from the isomorphism $\T^{2g}
\cong \C^g / \Lambda_\Om$ gives the universal bundle of Abelian varieties (with a
marked basis of the first homology) over Siegel upper half-space.
\end{rem}

This coordinate chart $\H_g \to \Pol^{\T^{2g}}(\T^{2g})$ is not convenient for the
description of all genuine nonnegative polarizations (namely, only those transverse
to the polarization spanned by the $\pd{}{x_j}$ directions appear in the closure of
$\H_g$ as a space of matrices). The convenient substitute (see, for instance, \cite{Ki})
for it is the closed Siegel disc $\DD_g$, which is the closure of the image of
$\H_g$ under the Cayley transform
\begin{equation} \label{av_tausomegas}
 \H_g \ni \Om \mapsto  \tau = (\ii-\Om)(\ii+\Om)^{-1} \in \breve{\DD}_g ,
\end{equation}
with inverse $\Om = \ii (1-\tau)(1+\tau)^{-1}$. The Siegel disc is a global chart
of $L\C^{2g}$, and the parametrization of invariant polarizations now reads
\be\label{av_polll}
 \P_\tau = \mathrm{span}\left\{\sum_l \left( -\ii (1-\ol{\tau})_{l k}\pd{}{x_l}
 +(1+\ol{\tau})_{lk}\pd{}{y_l} \right) \right\}_{k=1,\dots, g} .
\ee
Throughout the paper, $\tau$ will always denote a point in the closed Siegel disc, while
$\Om$ denotes a point in the upper half-space.

\begin{rem} A polarization is real if and only if $\tau$ is unitary. Following \cite{Wo},
we will call a real polarization {\it reducible}, if its space of leaves is a Hausdorff
manifold (or equivalently, in our case, if the leaves are compact). Note that this happens
if and only if $\tau$ is unitary and its entries lie in $\QQ[\ii]$. Reducible polarizations
and the quantizations defined by them
(whose elements are supported on leaves along which $\nabla$ has trivial holonomy,
the so-called Bohr-Sommerfeld leaves)
have been studied in the present context using geometric methods (see \cite{Ma} and
also earlier work by \'{S}niatycki \cite{Sn}), and will also be investigated in more detail
below.
Both reducible and non-reducible polarizations are also considered in \cite{An}.
\end{rem}

\subsection{The action of the real symplectic group}

Below, we will consider an action of the real
symplectic group $Sp(2g,\R)$ on the Lagrangian Grassmannian (see \cite{Wo}), which is
intimately related to the construction of the half-form (or metaplectic) correction
and the Blattner-Kostant-Sternberg pairing. On the upper half-space chart on the
positive Lagrangian Grassmannian, the action is given by the usual 
fractional linear transformation
\be
\label{av_mom}
 M(\Om) = (A\Om+B)(C\Om+D)^{-1}, \quad M = \left[ \begin{array}{cc} A & B \\
 C & D \end{array} \right] \in Sp(2g,\R) ,
\ee
and it is transitive on $\H_g$. It extends to all of $L\C^{2g}$ (or $\DD_g$), where
it becomes transitive on each stratum; in terms of the parameter $\tau$ and setting
$\tau':=M(\tau)$, the transformation is characterized by the equation
\begin{equation} \label{av_changetau}
 \big( (\tau'+1)(A+\ii B) + (\tau'-1)(\ii C-D) \big) \tau = (\tau'+1)(A-\ii B) +
 (\tau'-1)(\ii C+D) .
\end{equation}
In particular, the action of the integer symplectic group $Sp(2g,\Z)$ by
symplectomorphism of $\T^{2g}$
\[
 \T^{2g} \ni \vect{x\\y} \mapsto M \vect{x \\ y} \in \T^{2g},
\]
induces an action on polarizations via push-forward given by
\[
 M_*\P_\Om = \P_{M(\Om)} .
\]

\begin{rem} If we write this action on the symplectic coordinates $(x,y)$ in terms of 
the complex ones, we recover the usual transformation
\[
 z_\Om \mapsto z_{\Om'} = \t(C\Om+D)^{-1}z_\Om .
\]
\end{rem}

The metaplectic group (discussed in more detail in Section \ref{av_weilrep} below) is
defined as the connected two-fold covering group
$Mp(2g,\R)\to Sp(2g,\R)$. It cannot be realized as a matrix group. A convenient
description for our purpose (see \cite{We,G}) involves an open subset
$U\subset Sp(2g,\R)$ that generates the group in two steps, i.e. $U^2 =
Sp(2g,\R)$; $U$ is parametrized by triples $(P,L,Q)$ of $g\times g$ matrices,
with $L$ invertible, $P$ and $Q$ symmetric, via
\[
 (P,L,Q) \mapsto \left[ \begin{array}{cc} PL^{-1} & PL^{-1}Q-\t L \\
 L^{-1} & L^{-1}Q \end{array} \right] .
\]
The usefulness of this subset $U$ for the description of the metaplectic group
lies in the fact that its pre-image $\widetilde{U}$ in $Mp(2g,\Z)$ consists
of two disjoint diffeomorphic copies of $U$. This gives a workable decription of
the metaplectic group, as we will see below.

\section{A distributional construction of the quantum bundle}
\label{av_dcqb}

In order to treat quantizations using complex, mixed and real polarizations in a uniform
way, it is convenient to widen our perspective on the prequantum Hilbert space $\Gamma_{L^2}(L^k)$
of square-integrable sections, and consider instead the whole Gelfand triplet
\[
 \Gamma^\infty(L^k) \subseteq \Gamma_{L^2}(L^k) \subseteq \Gamma^{-\infty}(L^k) .
\]
Once we take into account the factors arising from the half-form correction, the
pairing on the prequantum Hilbert space $\Gamma_{L^2}$ and its extension to the
Gelfand triplet will, by construction, provide the BKS pairing, as we will see in the
next Section. The fact that the pairing splits nicely into a ``prequantum factor''
and a ``half-form factor'', as in the vector space case, is ultimately a consequence of the group
structure on $(\T^{2g},\om)$. Before studying the pairing, we will define
the quantum bundle (or rather, interpret the usual definition, as in \cite{Wo} or
\cite{Ki}) in the setting just outlined.

\begin{rem} Similar analytic behaviour occurs in the quantization of a flat symplectic
vector space, where in particular it also turns out to be necessary to include half-form
correction to obtain continuous behaviour of the BKS pairing up to the boundary
\cite{KW}. Also, in the context of toric varieties, a similar convergence to distributional
sections occurs when holomorphic polarizations degenerate to the toric real polarization,
with fibers the compact Lagrangian tori \cite{BFMN}.
\end{rem}

Examining the set of differential equations attached to a choice of polarization
that singles out the subspaces of covariantly constant sections, we will see that they
actually admit a natural weak variant that coincides with the usual equations for
square-integrable sections in the case of a complex polarization, but with the advantage
that there is a (non-zero) space of solutions also for mixed and real polarizations.
It is our aim, in this section, to show that this definition is the natural one from
the point of view of the transition from complex to mixed or real polarizations. This is
achieved in Theorem \ref{av_teorema} below.

\subsection{The weak equations of covariant constancy} \label{av_weakcc}

Given an invariant polarization $\P$, a smooth, at first, local section $s$ is covariantly constant along $\P$ if
\be \label{av_cc}
 \forall \xi \in \P: \quad \nabla_{\ol{\xi}} s =  \dd s \cdot \ol{\xi}
 -k\pii s(x,y) \sum (y_j \dd x_j -x_j \dd y_j) \cdot \ol{\xi} = 0 .
\ee
Note that if the polarization is given by a complex structure, these are just the Cauchy-Riemann
equations. In any case, every $\xi\in\P$ defines a continuous linear operator $\nabla_{\ol{\xi}}: 
\Gamma^\infty(L^k) \to \Gamma^\infty(L^k)$, and the quantum space associated with a complex
polarization $\P$ is given by the intersection of the kernels of these operators.

If the polarization has real directions (and the manifold we are quantizing is compact),
there are no non-zero global smooth solutions to the equations (\ref{av_cc}), since any such
solution would have to be supported on Bohr-Sommerfeld leaves:
smooth sections can be restricted to leaves, and non-trivial holonomy along any
non-contractible loop in a leaf forbids the existence of non-zero horizontal sections.

It is, therefore, natural to consider
the weak version of the operators (\ref{av_cc}) acting on distributional sections of $L^k$.
Distributional sections in the quantization of abelian varieties appeared also in \cite{An}.
As already mentioned in the introduction, in the case of toric manifolds this approach
has the advantage of including contributions from all, even singular, Bohr-Sommerfeld
fibers \cite{BFMN}; thus, the dimension of the quantization space in the real toric
polarization equals the dimension obtained from K\"ahler polarization.

\begin{rem}
\label{localtoric}
Locally, the neighbourhood of a Bohr-Sommerfeld fiber of $\T^{2g}$ is equivariantly 
symplectomorphic to a neighbourhood of a non-singular Bohr-Sommerfeld fiber of a toric manifold.
As seen in \cite{BFMN}, each Bohr-Sommerfeld fiber will be the support of a one-dimensional space of 
polarized distributional sections. Moreover, the explicit form of these sections is described 
in \cite{BFMN}.   
\end{rem}

{}For an open subset $U\subset \T^{2g}$, consider first the natural injection 
\ba\nonumber
\iota :\Gamma^\infty(L^k\vert_U) & \to & \Gamma^{-\infty}(L^k\vert_U)=(\Gamma^\infty_c(L^k\vert_U))'\\
\nonumber
s & \mapsto & \iota s (\phi) = \int_U s\phi \frac{\omega^n}{n!}. 
\ea
Then, the definition should make the following diagram commute,
\[
 \cdiag{
 \Gamma^{\infty}(L^k) \ar@{^{(}->}[r]^{\iota} \ar[d]_{\nabla_{{\xi}}} &
 \Gamma^{-\infty}(L^k) \ar[d]^{\nabla'_{{\xi}}} \\
 \Gamma^{\infty}(L^k) \ar@{^{(}->}[r]^{\iota} &
 \Gamma^{-\infty}(L^k) \\
} ,
\]
so that the operator $\nabla'_{{\xi}}$ extends the operator $\nabla_{{\xi}}$ to distributional sections.
Explicitly, on any open set $U$, for any smooth section $s\in \Gamma^\infty_c(L^k\vert_U)$ and for any test section $\phi \in \Gamma^\infty_c(L^{-k}\vert_U)$
with compact support and smooth section $\xi\in\Gamma^\infty(\P\vert_U)$ of the polarization on $U$,
\begin{eqnarray*}
(\nabla'_\xi \iota s) (\phi)& = & \int_U (\nabla_{{\xi}} s) \phi\, \dd^g x \dd^g y\\
& = & \int_U \left(\dd s \cdot {\xi}-k\pii s \sum (y_j \dd x_j -x_j \dd y_j) \cdot {\xi}\right) \phi\, \dd^g x \dd^g y\\
 & = & - \int_U s \left( {\rm div}\, {\xi} \phi + \dd \phi \cdot {\xi}+ k\pii \phi
 \sum (y_j \dd x_j -x_j \dd y_j) \cdot {\xi} \right)\, \dd^g x \dd^g y \\
 & = & -\int_U s \left(  {\rm div}\, {\xi} \phi + \nabla_{{\xi}}^{-1}\phi\right )\,\dd^g x \dd^g y,
\end{eqnarray*}
where $\nabla^{-1}$ stands for the connection on the inverse bundle $L^{-k}$ induced by $\nabla$.

It is, therefore, natural to define the operation of covariant
differentiation of distributional sections, $\forall \sigma \in\Gamma^{-\infty}(L^k\vert_U),
 \forall \xi\in\Gamma^\infty(\P\vert_U)$,
\be
\label{av_covconst23}
 (\nabla'_{{\xi}} \sigma)(\phi) := -\sigma ({\rm div}\, {\xi} \phi + \nabla^{-1}_{{\xi}}\phi),
 \quad \forall\phi \in \Gamma^\infty_c(L^{-k}\vert_U) ,
\ee
and to define the quantum space associated with the polarization as the
intersections of the kernels of the operators $\nabla'_{\ol{\xi}}:
\Gamma^{-\infty}(L^k) \to \Gamma^{-\infty}(L^k)$ for $\xi\in\Gamma^\infty(\P)$, as before. 
For K\"ahler polarizations, regularity of the Cauchy-Riemann equations (that are the
equations of covariant constancy in this case) guarantees that this definition is
conservative, that is,  one does not find distributional solutions to them which are
not holomorphic functions. On regularity of the Cauchy-Riemann equations see, for
instance, Chapter 6 in \cite{Gu} (the argument given there for the case of
distributions on $\C$ extends to $\C^n$), or \cite{KY}.

\subsection{The half-form correction} \label{av_halfform}

To define the half-form correction for the torus $\T^{2g}$ we have been considering, let us
first recall its definition for the case of a symplectic vector space $(V,\om)$ of real
dimension $2g$ (see \cite{Wo,KW}).

As before, we can for simplicity parametrize polarizations (nonnegative Lagrangian
subspaces  $\P \subset V_\C$) by the Siegel disc $\DD_g$. Over it, one considers
the canonical line bundle $K \subset \DD_g\times \La^g V_\C^* \to \DD_g$,
where each fiber is given by the space of $g$-forms that vanish upon contraction
with the conjugate of any vector in the polarization. We will use the same letter $K$
for the pull-back of this bundle to $\H_g$. The (indefinite) pairing on
the space of $g$-forms given by
\be\label{av_wedge}
 \langle \eta,\eta' \rangle \frac{(k\om)^g}{g!} = 2^{-g} \ii^g (-1)^{\frac{g(g-1)}{2}}
 \eta \wedge \overline{\eta'},
\ee
induces a pairing between any two fibers of this fibration. Notice that this pairing
is invariant under the natural action of the real symplectic group $Sp(V,\om) \cong
Sp(2g,\R)$.

Any $\Om\in\H_g$ defines a positive polarization by specifying a complex coordinate $z_\Om = x-\Om y$
on $V$, and $\dd^g z_\Om:= \dd z_\Om^1 \wedge \dots\wedge \dd z_\Om^g$
is a $g$-form generating the line of $K$ over $\Om$. A short calculation shows that the pairing comparing the
two different fibers over $\Om$ and $\Om'$ is, then,
\be \label{av_detomega}
 \langle \dd^g z_\Om, \dd^g z_{\Om'} \rangle = \det \frac{1}{2k\ii}(\Om-\ol{\Om'}) ,
\ee
and in particular we see that the pairing is positive definite over $\H_g$.

The half-form (or metaplectic) correction consists in the choice of a square root of $K$. 
Since $K$ is trivial, this looks like a trivial operation, but the behaviour of the 
inner product on the square root is an essential analytic ingredient for the BKS pairing.
We take advantage of the fact that there is a natural way of defining a specific branch
of the square root of a determinant as in (\ref{av_detomega}), given by a Gaussian integral
\[
 \left(\det \frac{1}{2k\ii}(\Om-\ol{\Om'})\right)^{-\frac{1}{2}} :=
\int_{\R^g} \e^{- \pi \frac{\t \xi (\Om-\ol{\Om'})\xi}{2k\ii}} \dd^g \xi
 = \langle \e^{\pii \frac{\t \xi\Om\xi}{2k}} ,
 \e^{\pii \frac{\t \xi\Om'\xi}{2k}} \rangle_{L^2(\R^g)} .
\]
Keeping a traditional, though possibly slightly misleading, notation $\sqrt{\dd^g z_\Om}$ for the
generator of the complex line of the half-form bundle over $\Om\in\H_g$, we define
the pairing via the embedding
\be\label{av_half-gaussian}
 \alpha \sqrt{\dd^g z_\Om} \mapsto \frac{1}{\alpha} \e^{\pii \frac{\t \xi\Om\xi}{2k}},
 \quad \alpha\in \C\setminus\{0\} ,
\ee
or, more explicitly (and with the sign determined by the Gaussian integral)
\be\label{av_ep}
 \langle \sqrt{\dd^g z_\Om}, \sqrt{\dd^g z_{\Om'}} \rangle =
 \left(\det \frac{1}{2k\ii}(\Om-\ol{\Om'})\right)^{\frac{1}{2}} .
\ee
We will see below that this point of view is also very convenient for the description of
the action of the metaplectic group on half-forms.

This definition works only over the positive Lagrangian Grassmannian;
we need to rescale the trivializing section to describe the half-form bundle
over the whole of the nonnegative Lagrangian Grassmannian. The canonical line
determined by the polarization $\P_\tau$ is generated by the $g$-form
\[
 \dd^g(x,y)_\tau = \bigwedge^g [ (1+\tau)\dd x - \ii (1-\tau)\dd y ] =
 \det (1+\tau) \dd^g z_{\Om(\tau)} ,
\]
where the last equality holds whenever $\dd^g z_{\Om(\tau)}$
% that equals $\det (1+\tau) \dd^g z_{\Om(\tau)}$ where the latter
is defined. The half-form bundle is then described by any of these two trivializations,
\[
 \C \sqrt{\dd^g z_{\Om(.)}} \cong \C \sqrt{\dd^g(x,y)_.} \to \DD_g ,
\]
where the isomorphism is given by
\be\label{av_branch}
 \sqrt{\dd^g z_{\Om(\tau)}} =  (\det (1+\tau))^{-\frac12} \sqrt{\dd^g(x,y)_\tau},
\ee
and the branch of the square root is fixed by demanding it to be $1$ for $\tau =0$.

Let us now address the half-form correction for $\T^{2g}$. Recall from Section \ref{av_tau}
that the bundle of nonnegative Lagrangian subspaces $L\T^{2g}$ of the complexified
tangent bundle is canonically trivialized,  $L\T^{2g} \cong \T^{2g} \times L\C^{2g}$.
The canonical bundle $K\to L\T^{2g}$, generated over any point $(x,y,\tau) \in
\T^{2g}\times\DD_g$ by $\dd^g(x,y)_\tau$ is again topologically trivial. It is clear
that $K$ restricted to $\T^{2g}\times\{\Om\}\cong X_\Om$ is the usual canonical bundle
$K_\Om\to X_\Om$ of holomorphic $g$-forms. As before (\ref{av_wedge}), the canonical bundle
$K$ comes with a natural Hermitian structure $h_K$ determined by the Liouville form.

As Hermitian bundle, the half-form bundle $\de \to L\T^{2g}$ is just the pull-back
\[
 \cdiag{
 \de \ar[r] \ar[d] & \C \sqrt{\dd^g(x,y)_.} \ar[d] \\
 L\T^{2g} \cong \T^{2g}\times \DD_g \ar[r] & \DD_g
 } .
\]

A connection on $\de$ is determined as follows. $K$ is equipped with a natural
partial connection $\nabla^{\mathrm{part}}$ given
pointwise at $((x,y),\tau)$ by the Lie derivative of complexified $g$-forms on
$\T^{2g}$ at $(x,y)$ along the directions of $\overline{\P_\tau}$. In fact, as $K$ is trivial, $\nabla^{\mathrm{part}}$ extends to the relative
trivial connection $\nabla^{\mathrm{triv}}$ of the family $L\T^{2g} \to \DD_g$,
corresponding to covariant derivatives along the directions of
the fibers $X_\Om$. In particular, the section $(x,y)\mapsto \dd^g(x,y)_\tau$ is
parallel relative to this connection.

The condition imposed on the connection $\widetilde{\nabla}$ on the half-form bundle $\delta$ 
is that it satisfy Leibniz's rule
\[
 \nabla^{\mathrm{triv}} (\mu\tns\mu') = (\widetilde{\nabla} \mu) \tns \mu' + \mu \tns
 (\widetilde{\nabla}\mu').
\]
Thus, the choice of connection $\widetilde{\nabla}$ is determined by the holonomies
along $2g$ generators of $H_1(\T^{2g},\R)$, subject to the condition that they square to 1.
The Hermitian structure on $\de$ is obtained by taking a consistent square root 
of the Hermitian structure on $K$.

Note that $\sqrt{\dd^g z_\Om}$ is a parallel section (along the base $\T^{2g}$, as above)
of $\de$ only if $\widetilde{\nabla}$ is the trivial connection. 
Choosing the pair $(\de,\widetilde{\nabla})$ and
an invariant holomorphic polarization $\P_\Om$ is equivalent to fixing a specific point
$\chi$ of order at most two in $\Pic^0(X_\Om)$, that is, a half characteristic. If $\chi$ is
not zero (and does not represent the trivial holomorphic bundle), there are no global
holomorphic sections; in particular, $\sqrt{\dd^g z_\Om}$, being a smooth trivialization,
cannot be
parallel along the antiholomorphic directions. Solving equations of covariant constancy on
$L^k\tns\P_\Om^*\de$ gives, then, the space of holomorphic sections $H^0(L^k\tns\chi)$, i.e.
leads to theta functions with half-characteristic.

In the remainder of the paper, for simplicity, we will use the correction
$(\de,\widetilde{\nabla})$ corresponding to $\chi=1\in \Pic^0(X_\Om)$. All the results extend
to other choices for $\chi$.

\subsection{The Weil representation of the metaplectic group}
\label{av_weilrep}

In order to study solutions of the equations of covariant constancy for polarizations on the
boundary of $\DD_g$, and in view of the Weil-Brezin expansion, it is convenient to use a
natural action of the metaplectic group on $L^2(\R^g)$. Recall from \cite{We} (see also
\cite{G,F,FN}) that the metaplectic group $Mp(2g,\R)$, the connected
two-fold cover of $Sp(2g,\R)$, can be constructed as a group of unitary operators on the
Hilbert space $L^2(\R^g)$, as follows.

Considering real $g\times g$ matrices $P,L,Q$ with $P$ and $Q$ symmetric and $L$ invertible,
and an integer $m \ \mod 4$ indexing a choice of square root $\ii^m$ of $\sign \det L$,
consider the integral operator $S(P,L,Q)_m:\S(\R^g) \to \S(\R^g)$ given by 
\be
\label{av_maurice}
 S(P,L,Q)_m f (u) :=  \ii^{-\frac{g}{2}+m} k^{\frac{g}{2}} \sqrt{|\det L|}
  \int \e^{k\pii (\t uPu -2 \t u\t Lv + \t vQv)}f(v) \dd^g v ,
\ee
where we use the notation $\ii^{\phi} := \e^{\frac{\pii}{2} \phi}$ for any real number $\phi$.
These operators are continuous on $\S(\R^g)$, therefore also on $\S'(\R^g)$, and are 
unitary isomorphisms when restricted to $L^2(\R^g)$.

The metaplectic group is the group of unitary operators generated by all operators of
this form. The covering maps each generator $S(P,L,Q)_m$ to the symplectic matrix
specified by $(P,L,Q)$. Note that this projection differs from the one in \cite{G} by
an automorphism of $Sp(2g,\R)$. Any element in $Mp(2g,\R)$ can be represented by the
product of two operators of this form. We then have,

\begin{lem}\label{av_gaussiana} Let $\Om\in\H_g$; then
\[
 (S(P,L,Q)_m \e^{k\pii \t v\Om v})(u) = \ii^m \left(\frac{|\det L|}{\det(\Om+Q)}\right)^{\frac12}
\e^{k\pii \t u\Om'u} ,
\]
where $\Om'\in\H_g$ is given by
\[
 \Om' = P-\t L(\Om+Q)^{-1} L = \left[\begin{array}{cc} PL^{-1} & P L^{-1} Q- \t L \\
 L^{-1} & L^{-1} Q \end{array}\right] (\Om).
\]
\end{lem}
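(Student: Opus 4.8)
The plan is to prove the formula by direct computation: apply the integral operator $S(P,L,Q)_m$ to the Gaussian $\e^{k\pii\,\t v\Om v}$ and evaluate the resulting Gaussian integral in $v\in\R^g$. Substituting into (\ref{av_maurice}), we obtain
\[
 (S(P,L,Q)_m\,\e^{k\pii\,\t v\Om v})(u) = \ii^{-\frac g2+m}k^{\frac g2}\sqrt{|\det L|}
 \int_{\R^g} \e^{k\pii(\t uPu - 2\,\t u\,\t L v + \t v(\Om+Q)v)}\,\dd^g v .
\]
Since $\Om\in\H_g$, the symmetric matrix $\Om+Q$ has positive-definite imaginary part, so $-\ii\pi k(\Om+Q)$ has positive-definite real part and the Gaussian integral converges absolutely. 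The standard complex Gaussian integral formula gives, after completing the square in $v$ (writing the exponent as $k\pii\,\t v(\Om+Q)v - 2k\pii\,\t v L u + k\pii\,\t uPu$, with critical point $v_* = (\Om+Q)^{-1}Lu$),
\[
 \int_{\R^g}\e^{k\pii\,\t v(\Om+Q)v - 2k\pii\,\t v Lu}\,\dd^g v
 = \big(\det(-\ii k(\Om+Q))\big)^{-\frac12}\,\e^{-k\pii\,\t u\,\t L(\Om+Q)^{-1}Lu},
\]
so that the total exponent becomes $k\pii\,\t u\big(P - \t L(\Om+Q)^{-1}L\big)u = k\pii\,\t u\Om' u$, which already identifies $\Om'$ as claimed; the matrix-fractional-linear reformulation of $\Om'$ is just the triple-$(P,L,Q)$ normal form from Section~\ref{av_weilrep} and requires only the block-matrix identity $\bigl[\begin{smallmatrix}PL^{-1}&PL^{-1}Q-\t L\\ L^{-1}&L^{-1}Q\end{smallmatrix}\bigr](\Om) = (PL^{-1}(C\Om+D)+ \dots)(\dots)^{-1}$, a routine simplification. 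That $\Om'\in\H_g$ follows because $S(P,L,Q)_m$ maps the $L^2$-normalizable Gaussian $\e^{k\pii\,\t v\Om v}$ to another nonzero Gaussian which must then also be $L^2$, forcing $\im\Om'>0$; alternatively it is the statement that the $Sp(2g,\R)$-action (\ref{av_mom}) preserves $\H_g$.

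The only delicate point is the \emph{branch of the square root}: collecting the prefactors we get $\ii^{-\frac g2+m}k^{\frac g2}\sqrt{|\det L|}\cdot\big(\det(-\ii k(\Om+Q))\big)^{-\frac12}$, and we must check this equals $\ii^m\big(|\det L|/\det(\Om+Q)\big)^{\frac12}$ with the \emph{same} branch conventions used throughout the paper. Pulling out $k^{\frac g2}\cdot k^{-\frac g2}=1$ and $\ii^{-\frac g2}\cdot(\det(-\ii(\Om+Q)))^{-\frac12}$, the content of the claim is that $\ii^{-\frac g2}\big(\det(-\ii(\Om+Q))\big)^{-\frac12} = \big(\det(\Om+Q)\big)^{-\frac12}$ for the branch of $(\det(\Om+Q))^{-\frac12}$ specified via the Gaussian integral in Section~\ref{av_halfform} (cf.\ the formula for $\big(\det\frac{1}{2k\ii}(\Om-\ol{\Om'})\big)^{-\frac12}$). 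I would verify this by noting that both sides are continuous nonvanishing functions of $\Om\in\H_g$, that $\H_g$ is connected, and that they agree at the single point $\Om=\ii I$ (where $-\ii(\Om+Q)\big|_{Q=0}=I$, $\ii^{-\frac g2}\cdot 1 = \e^{-\pii g/4}$, and the Gaussian-integral branch of $(\det\ii I)^{-1/2}$ is likewise $\e^{-\pii g/4}$); hence they coincide on all of $\H_g$. The factor $\ii^m$ then accounts precisely for the choice of square root of $\sign\det L$, matching the indexing of the generator $S(P,L,Q)_m$.

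I expect the \textbf{main obstacle} to be bookkeeping of these phase/branch conventions — reconciling the $\ii^{\phi}=\e^{\pii\phi/2}$ convention, the $\ii^m$ indexing of $\sign\det L$, and the Gaussian-integral branch of the determinantal square root from Section~\ref{av_halfform} — rather than the Gaussian integration itself, which is entirely standard. A clean way to organize this is to first establish the formula up to an undetermined eighth root of unity depending only on $g$ (and a sign depending on $m$), then pin down that constant by specializing to $P=0$, $L=I$, $Q=0$ and $\Om=\ii I$, where $S(0,I,0)_m$ is (a fixed multiple of) the standard Fourier transform and the Gaussian $\e^{-\pi k\,\t vv}$ is mapped to itself, making the constant transparently equal to $\ii^m$.
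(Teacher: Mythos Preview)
Your proposal is correct and follows essentially the same approach as the paper: substitute into the definition (\ref{av_maurice}) and evaluate the resulting complex Gaussian integral by completing the square, obtaining $\Om' = P - \t L(\Om+Q)^{-1}L$ from the exponent. The paper's proof is in fact briefer than yours; it simply records the Gaussian integral $\sqrt{\pi^g/\det(-k\pii(\Om+Q))}$ and states that the assertion follows.

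The one place where you invest considerably more effort than the paper is the branch of the square root. You propose to pin it down by a continuity argument on $\H_g$ together with a check at $\Om=\ii I$, $P=Q=0$, $L=I$. The paper handles this differently (and more economically): in the Remark immediately following the lemma it declares that the sign of $\sqrt{\det(\Om+Q)}$ in the statement is \emph{defined} by the Gaussian integral appearing in the proof, and that it is not necessary to specify it explicitly. In other words, the paper takes the lemma as simultaneously fixing the branch convention rather than verifying compatibility with the convention of Section~\ref{av_halfform}. Your more careful bookkeeping is not wrong, but it is not required for the lemma as the paper formulates it.
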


\begin{proof} Applying the definitions,
\begin{eqnarray*}
 S(P,L,Q)_m \e^{k\pii \t v\Om v} (u) & = &  \ii^{-\frac{g}{2}+m}k^{\frac{g}{2}}\sqrt{|\det L|}
  \int \e^{k\pii (\t uPu -2 \t u\t Lv + \t v(\Om+Q)v)} \dd^g v \ = \\
 & = & \ii^{-\frac{g}{2}+m}k^{\frac{g}{2}} \sqrt{|\det L|}
  \sqrt{\frac{\pi^g}{\det-k\pii(\Om+Q)}} \\
 & & \quad  \e^{k\pii \t uPu} \e^{- k\pii \t u \t L (\Om+Q)^{-1} L u},
\end{eqnarray*}
from which the assertion follows.
\end{proof}

\begin{rem}
Note that the sign of the square root in the statement $\sqrt{\det(\Om + Q)}$ is
determined by evaluation of the Gaussian integral in the proof. We will not need to
specify it explicitely.
\end{rem}

The action of $Sp(2g,\R)$ on the Lagrangian Grassmannian, given by $\Omega\mapsto M(\Omega)$, lifts to 
$K$, where one obtains the ordinary pull-back of $g$-forms
\be \label{av_pbgf}
(M^*)^{-1} \dd^g z_\Om = \det(C\Om + D) \dd^g z_{\Om'},
\ee
for $M\in Sp(2g,\R)$ as in (\ref{av_mom}). By the construction of the
half-form bundle $\de$ and the identification of the Hermitian metric
(\ref{av_half-gaussian}) on it, the unitary representation of the
metaplectic group that lifts (\ref{av_pbgf}) is then given by
\be\label{av_actiononhalf}
 S(P,L,Q)_m \sqrt{\dd^g z_\Om} = \ii^{-m} |\det L|^{-\frac12}
 \sqrt{\det (\Om + Q)} \sqrt{\dd^g z_{\Om'}},
\ee 
where $S(P,L,Q)_m$ is any generator of $Mp(2g,\R)$ and $\Om' = M(\Om)$,
with $M=(P,L,Q)$.

{}From this follows immediately

\begin{prop}\label{av_descent}
The product of the action of the metaplectic group $Mp(2g,\R)$ on $\Gamma^{-\infty}(L^k)$
with the natural action on $\de$ descends to an action of $Sp(2g,\R)$
on $\Gamma^{-\infty}(L^k)\otimes\sqrt{\dd^g (x,y)_\tau}\to \DD_g$ that lifts the symplectic
action on the Siegel disc.
\end{prop}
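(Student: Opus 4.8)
The plan is to verify that the naive ``diagonal'' action — tensoring the Weil representation on $\Gamma^{-\infty}(L^k)$ with the metaplectic action on half-forms — is insensitive to the sign ambiguity in the metaplectic cover, hence descends from $Mp(2g,\R)$ to $Sp(2g,\R)$. First I would make precise the two factors: on $\Gamma^{-\infty}(L^k)$ the metaplectic group acts via the Weil--Brezin expansion by letting each generator $S(P,L,Q)_m$ act (componentwise on the $k^g$ copies of $\S'(\R^g)$, with the appropriate rescaling by $k$) through formula (\ref{av_maurice}); on the half-form line over $\Om$ it acts by (\ref{av_actiononhalf}). The kernel of $Mp(2g,\R) \to Sp(2g,\R)$ is generated by the element lying over the identity, which in the chosen presentation is represented by $S(0,I,0)_2 = -S(0,I,0)_0$, i.e. the sign change $\ii^m \mapsto -\ii^m$; concretely, changing $m$ by $2$ multiplies $S(P,L,Q)_m$ by $-1$. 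So the only thing to check is that this sign is the \emph{same} on both tensor factors, so that it cancels in the tensor product.

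The computation is immediate from the formulas already established. On $\Gamma^{-\infty}(L^k)$, replacing $m$ by $m+2$ in (\ref{av_maurice}) multiplies the operator by $\ii^2 = -1$. On the half-form bundle, (\ref{av_actiononhalf}) shows $S(P,L,Q)_m$ acts with the prefactor $\ii^{-m}|\det L|^{-1/2}\sqrt{\det(\Om+Q)}$, so replacing $m$ by $m+2$ multiplies by $\ii^{-2} = -1$ as well. Hence on $\Gamma^{-\infty}(L^k)\otimes\sqrt{\dd^g z_\Om}$ the two signs cancel and the action of the nontrivial deck transformation is trivial; by the universal property of the quotient, the $Mp(2g,\R)$-action factors through a well-defined $Sp(2g,\R)$-action. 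Since any element of $Sp(2g,\R)$ is a product of two elements of $U$ (so of two generators $S(P,L,Q)_m$), and each generator visibly carries $\Gamma^{-\infty}(L^k)\otimes\sqrt{\dd^g z_\Om}$ into $\Gamma^{-\infty}(L^k)\otimes\sqrt{\dd^g z_{\Om'}}$ with $\Om' = M(\Om)$ by Lemma \ref{av_gaussiana} and (\ref{av_actiononhalf}), the descended action covers the fractional-linear action $\Om \mapsto M(\Om)$ on $\H_g$; one then invokes the trivialization (\ref{av_branch}) relating $\sqrt{\dd^g z_{\Om(\tau)}}$ to $\sqrt{\dd^g(x,y)_\tau}$ to transport the statement to the $\DD_g$-chart and extend it to all of the closed Siegel disc by continuity of the formulas in $\tau$.

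The one point requiring a little care — and the likely main obstacle — is that the two group actions being tensored are a priori only defined over $\H_g$ (the formulas (\ref{av_maurice}), (\ref{av_actiononhalf}), and Lemma \ref{av_gaussiana} all presuppose $\Om\in\H_g$), whereas the target bundle in the statement lives over the whole closed disc $\DD_g$. So after establishing the descent over $\H_g$ I would argue that the resulting $Sp(2g,\R)$-action on sections over $\H_g$, written in the $(x,y)_\tau$-trivialization via (\ref{av_branch}) and (\ref{av_changetau}), is given by formulas that are regular (indeed polynomial/rational with nonvanishing denominators) in $\tau$ on all of $\DD_g$ — here using that the metaplectic operators act continuously on $\S'(\R^g)$, hence the prequantum factor extends, and that the half-form factor (\ref{av_branch}) has its branch fixed at $\tau = 0$ — and therefore the action extends uniquely and continuously to $\DD_g$. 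Because $Sp(2g,\R)$ acts transitively on each stratum of $L\C^{2g}$ and preserves the stratification, the extended action is by bundle maps covering the symplectic action on the Siegel disc, which is the assertion. Consistency of the group law on the boundary then follows from its validity on the dense open set $\H_g$.
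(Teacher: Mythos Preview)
Your argument is correct and is exactly what the paper has in mind: in the text the proposition is declared to follow immediately from (\ref{av_maurice}) and (\ref{av_actiononhalf}), the point being the cancellation of $\ii^{m}$ against $\ii^{-m}$ that you spell out. One small slip: $S(0,I,0)_m$ does not project to the identity of $Sp(2g,\R)$ --- its image has lower-left block $C=I$, whereas the identity has $C=0$ and so lies outside $U$ --- hence it does not literally represent the nontrivial deck transformation. What you actually use, and what suffices, is that for each fixed $(P,L,Q)$ the two preimages $S(P,L,Q)_m$ and $S(P,L,Q)_{m+2}$ differ by the central element of $Mp(2g,\R)$, and your sign comparison shows they act identically on the tensor product; that is precisely the descent criterion. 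Your discussion of the extension to $\partial\DD_g$ is more careful than the paper, which tacitly treats the action on the fixed space $\Gamma^{-\infty}(L^k)$ and on the globally trivialized line $\C\sqrt{\dd^g(x,y)_\tau}$ as defined over all of $\DD_g$ from the outset and defers the analytic content of the boundary behaviour to Theorem~\ref{av_teorema}.
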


\subsection{The extended quantum Hilbert bundle}
\label{av_eqb}

As explained above, since we consider real polarizations as limits of
holomorphic polarizations, and since for the real polarizations there are no smooth
solutions of the equations of covariant constancy, we use the weak
version of these equations and look for solutions in the space of distributional
sections $\Gamma^{-\infty}(L^k)$. This is particularly simple to do in terms of the
Weil-Brezin expansion in (\ref{av_wb}), and making use of the parameter $\tau\in\DD_g$
for the polarization in Section \ref{av_tau}.

Let $\widetilde{\Q}_\tau$ be the space of distributional sections 
$\si \in \Gamma^{-\infty}(L^k)$ that are solutions of the weak equations 
of covariant constancy (\ref{av_covconst23}) with respect to the polarization $\P_\tau$. 
These spaces naturally form a bundle of vector subspaces in the bundle
of distributional sections,
\[
 \widetilde{\Q} \subset \Gamma^{-\infty}(L^k) \times \DD_g \to \DD_g.
\]

\begin{lem}\label{av_ecc}
Under the identification of global sections of $\Gamma^{-\infty}(L^k)$ with
elements in $\S'(\R^g)^g$ via the Weil-Brezin transform, the equations of 
covariant constancy defining $\widetilde{Q}_\tau$ become independent for
each of the $g$ components $\S'(\R^g)$, and are identical on all of them.

Explicitly, using the first order differential operators $\Xi_\tau:
\S'(\R^g) \to \S'(\R^g)^g$ defined by
\be\label{av_xis}
 \Xi_\tau = (\tau+1)\pd{}{y} - 2k\pi (\tau-1)y , \qquad \tau\in\DD_g,
\ee
we have
\[
 \si \in \widetilde{Q}_\tau \iff \forall l=1,\dots,g: (\si)_l \in
 {\rm Ker}\,\, \Xi_\tau .
\]
\end{lem}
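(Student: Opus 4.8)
The plan is to compute directly the effect of the weak covariant-derivative operators $\nabla'_{\ol\xi}$, for $\xi$ a generator of $\P_\tau$ as in (\ref{av_polll}), on a distributional section expressed through its Weil--Brezin coefficients $(\si)_l$, $l\in(\Z/k\Z)^g$. Since $\P_\tau$ is spanned by the $g$ invariant vector fields
\[
 \xi_k = \sum_l \big( -\ii(1-\ol\tau)_{lk}\pd{}{x_l} + (1+\ol\tau)_{lk}\pd{}{y_l} \big),
\]
each of these is \emph{divergence-free} (constant coefficients on $\T^{2g}$), so the defining formula (\ref{av_covconst23}) simplifies: the term $\mathrm{div}\,\xi$ drops out, and $\nabla'_{\ol{\xi_k}}$ is just the transpose of the prequantum operator $\nabla_{\ol{\xi_k}}$ acting on test sections. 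Hence it suffices to understand how $\nabla_{\ol{\xi_k}}$ acts after conjugating by the Weil--Brezin isomorphism (\ref{av_wb})--(\ref{av_wb2}), and then dualize.

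**Key steps.** First I would write out $\nabla_{\ol{\xi_k}}$ explicitly on a section $s$ presented as a quasi-periodic function on $\R^{2g}$: from (\ref{av_conn}) (at level $k$),
\[
 \nabla_{\ol{\xi_k}} s = \sum_l \big( \ol{\Om}_{lk}\partial_{x_l} + \partial_{y_k} \big) s + (\text{zeroth order term in }k\pii\, x,y),
\]
equivalently in the $\tau$-variables using $-\ii(1-\tau)$, $(1+\tau)$. Second, I would substitute the inversion formula (\ref{av_wb2}),
\[
 s(x,y) = \e^{-k\pii\, {}^t x y} \sum_{l}\sum_{m} (s)_l\big(y-m-\tfrac lk\big)\,\e^{2\pii\, {}^t(km+l)x},
\]
and carry the differentiation through. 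The $\partial_{x_l}$-derivative hits only the exponential factors and produces the multiplication operator that (after combining the $\e^{-k\pii\,{}^t xy}$ prefactor) converts $\partial_{x_l}$-terms into the $y$-multiplication seen in (\ref{av_xis}); the $\partial_{y_k}$-derivative hits the argument of $(s)_l$ and produces $\partial/\partial y$. Collecting the coefficient of each Fourier mode $\e^{2\pii\,{}^t(km+l)x}$, one reads off that $\nabla_{\ol{\xi_k}}s$ has Weil--Brezin coefficients $\big(\Xi_\tau^{(k)}(s)_l\big)$, where $\Xi_\tau = (\tau+1)\partial_y - 2k\pi(\tau-1)y$ acts diagonally on the index $l$ (no coupling between different $l$'s, because differentiation is local in $y$ and the Fourier modes are orthogonal) and identically for every one of the $g$ components of the vector-valued output (the $g$ comes from $k=1,\dots,g$, and the $\tau$-dependent matrix $(1+\ol\tau)$, $(1-\ol\tau)$ just repackages the $g$ scalar equations $\partial_y$ and $y$). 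Third, since the Weil--Brezin map is a topological isomorphism extending to duals, $\si\in\widetilde Q_\tau$ iff each prequantum operator $\nabla'_{\ol{\xi_k}}$ annihilates $\si$, iff each coefficient $(\si)_l$ is annihilated by the transpose of $\Xi_\tau$ on $\S'(\R^g)$ — and for the first-order operator $\Xi_\tau$ the transpose differs from $\Xi_\tau$ itself only by harmless sign/reflection conventions already absorbed into the statement, so the condition is exactly $(\si)_l\in\mathrm{Ker}\,\Xi_\tau$ for all $l$.

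**Main obstacle.** The routine but genuinely delicate part is the bookkeeping in the second step: tracking the $\e^{-k\pii\,{}^t xy}$ and $\e^{k\pii\,{}^t x(y+l/k)}$ twisting factors through both differentiations and both lattice sums, and checking that the $\tau$-dependent linear combination $-\ii(1-\ol\tau)\partial_x + (1+\ol\tau)\partial_y$ collapses exactly to $(\tau+1)\partial_y - 2k\pi(\tau-1)y$ with the stated normalization (in particular that the conjugate $\ol\tau$ on the polarization side becomes $\tau$ on the Weil--Brezin side, which is the point of passing to $\ol{\xi}$ in the covariant-constancy equations). I would handle this by doing the $g=1$ scalar computation in full first, where the matrices are numbers, and then observing that for general $g$ the computation is entry-wise identical because all the operators involved (the $x$-derivatives, the $y$-derivatives, the lattice translations) commute and act independently on the coordinates; the matrices $(1\pm\ol\tau)$ then simply express the $g$ polarization generators as $\C$-linear combinations of the $g$ scalar operators $\Xi_\tau$ acting on a fixed component, which is why the equations ``become independent for each component and identical on all of them.'' The transitivity/independence claim itself requires no extra work once the formula for $\nabla_{\ol{\xi_k}}$ in Weil--Brezin coordinates is in hand.
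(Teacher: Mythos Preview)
Your approach is essentially the same as the paper's: compute the Weil--Brezin transform of $\nabla_{\ol\xi}$ for a constant vector field $\xi$, and read off the operator $\Xi_\tau$ acting diagonally on the coefficients $(\si)_l$. The computation you outline in the ``second step'' is exactly what the paper does (it records the intermediate formula $(\nabla_\xi s)_l = {}^t\beta\,\partial_y(s)_l + 2k\pii\,{}^t\alpha\,y\,(s)_l$ for $\xi = {}^t\alpha\,\partial_x + {}^t\beta\,\partial_y$ and then specializes to the polarization generators).

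Where you diverge is in the passage to distributions. You frame this as a transpose computation and then assert that ``the transpose differs from $\Xi_\tau$ itself only by harmless sign/reflection conventions already absorbed into the statement.'' This is the weak point of your write-up: it is imprecise and, taken literally, misleading. The formal transpose of $(\tau+1)\partial_y - 2k\pi(\tau-1)y$ is $-(\tau+1)\partial_y - 2k\pi(\tau-1)y$, whose kernel in $\S'(\R^g)$ is \emph{not} the same as that of $\Xi_\tau$. The paper's argument is cleaner and sidesteps this entirely: since $\nabla'_{\ol\xi}$ is, by construction, the unique continuous extension of $\nabla_{\ol\xi}$ from $\Gamma^\infty(L^k)$ to $\Gamma^{-\infty}(L^k)$, and since the Weil--Brezin map is a topological isomorphism on both levels, it suffices to verify the identity $(\nabla_{\ol\xi}s)_l = \Xi_\tau(s)_l$ on the dense subspace of smooth sections; the distributional case then follows by continuity, with $\Xi_\tau$ acting on $\S'(\R^g)$ in the standard way (distributional derivative plus multiplication by a polynomial). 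No separate transpose bookkeeping is needed. I would recommend replacing your third step with this density argument.
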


\begin{proof}
Since all the operators involved are continuous, it suffices to check the
identity on the dense subspace of smooth sections $s\in \Gamma^\infty(L^k)$.
Using (\ref{av_conn}) and (\ref{av_wb2}) we obtain
$$
(\nabla_\xi s)_l(y) = \t \beta \pd{}{y} (s)_l(y)+k2\pii \t \alpha y (s)_l(y),
$$ 
where $\xi = \t\alpha \pd{}{x}+\t \beta \pd{}{y}$ is any constant vector field.
{}From (\ref{av_polll}) we immediately get the lemma.
Note that we get the same equation for each Weil-Brezin coefficient separately.
\end{proof}

\begin{dfn} \label{av_Qdef} The quantum Hilbert bundle $\Q$ over the space of invariant
polarizations of $(\T^{2g},\om)$ is defined as 
\[
 \Q := \widetilde{\Q}\tns\C\sqrt{\dd^g(x,y)_.} \to \DD_g .
\]
where $\C\sqrt{\dd^g(x,y)_.} \to \DD_g$ is the half-form bundle.
To simplify the notation we will write $\Q_\tau = \widetilde{\Q}_\tau 
\sqrt{\dd^g(x,y)_\tau}$.
\end{dfn}

Recall that due to regularity of the Cauchy-Riemann equations the elements of the
quantum Hilbert space $\Q_\Om$ over a point $\Om$ in the interior $\breve{\DD}_g
\cong \H_g$ of the Lagrangian Grassmannian are given by the holomorphic sections
of $L^k$, with the holomorphic structure determined by $\Om$, tensored with the
half-form correction $\C\sqrt{\dd^g z_\Om}$,
\[
 \Q_\Om \cong H^0(X_\Om,L^k_\Om) \sqrt{\dd^g z_\Om} .
\]
Definition \ref{av_Qdef} extends the bundle of quantizations over the Siegel upper half-space
to the boundary of the Siegel disc $\DD_g$, proceeding as depicted in
% Figure \ref{av_fig1}, and in
the following diagram:
\[
 \cdiag{
 \Gamma^{\infty}(L^k)\sqrt{\dd^g z_{.}} \ar@{^{(}->}[r] \ar@{}[d]|\bigcup &
 \Gamma^{-\infty}(L^k)\sqrt{\dd^g (x,y)_{\tau(.)}} \ar@{}[d]|\bigcup \\
 \Q \ar@{^{(}->}[r] \ar[d] & \Q \ar[d] \\
 \H_g \cong \breve{\DD}_g \ar@{^{(}->}[r] & \DD_g
}
\]
The quantum bundle is naturally viewed as a sub-bundle of the trivial
(infinite-dimensional) bundle of smooth (on the left part of the diagram) or distributional
(on the right part of the diagram) sections of the half-form corrected prequantum bundle.
By abuse of notation, we will usually not distinguish the spaces of smooth
sections from the distributions they naturally define by integrating against
Liouville measure.

First, we prove that the dimension of the spaces $\Q_\tau$ (or $\widetilde{\Q}_\tau$)
is independent of $\tau\in\DD_g$. (Recovering a result also in \cite{An}.)

\begin{lem} For all $\tau\in\DD_g$ with $\det(1+\tau)\neq 0$, 
$$\dim \Q_\tau= k^g.$$  Explicitly, the corresponding subspace $\widetilde{\Q}_\tau$
of $\Gamma^{-\infty}(L^k)$ is spanned by the sections with Weil-Brezin coefficients
\be
 \label{av_eccsoltau}
 (\widetilde{\th}^l_\tau)_{l'} := \de_{l,l'} \det(1+\tau)^{-\frac 1 2} \e^{k\pii
\t y \Om(\tau) y } \in \S'(\R^g),
\ee
where the branch of the square root is the same one as in (\ref{av_branch}).
\end{lem}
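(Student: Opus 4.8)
The plan is to push the problem through the Weil--Brezin transform by means of Lemma~\ref{av_ecc}, reducing it to the computation of $\mathrm{Ker}\,\Xi_\tau$ inside $\S'(\R^g)$, and then to solve explicitly the overdetermined first-order system that defines this kernel. By (\ref{av_wb}) the Weil--Brezin map identifies $\Gamma^{-\infty}(L^k)$ with the product of $k^g$ copies of $\S'(\R^g)$ indexed by $l\in(\Z/k\Z)^g$, and by Lemma~\ref{av_ecc} a distributional section $\si$ lies in $\widetilde{\Q}_\tau$ precisely when each Weil--Brezin coefficient $(\si)_l$ lies in $\mathrm{Ker}\,\Xi_\tau$. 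Hence, once I show that $\mathrm{Ker}\,\Xi_\tau$ is one-dimensional and spanned by $\e^{k\pii\t y\Om(\tau)y}$, it follows that $\widetilde{\Q}_\tau$ has dimension $k^g$, a basis being obtained by placing a fixed generator of $\mathrm{Ker}\,\Xi_\tau$ in the $l$-th slot for each $l$; taking that generator to be $\det(1+\tau)^{-1/2}\e^{k\pii\t y\Om(\tau)y}$ --- a legitimate choice, since the constant is nonzero, with the branch as in (\ref{av_branch}) --- gives the sections $\widetilde{\th}^l_\tau$ of the statement. Tensoring with the one-dimensional half-form line $\C\sqrt{\dd^g(x,y)_\tau}$ does not affect the dimension, so also $\dim\Q_\tau=k^g$.

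For the kernel computation I would first record the algebraic identity that turns $\Xi_\tau$ into a Cauchy--Riemann-type operator. Since $1-\tau$, $1+\tau$ and $(1+\tau)^{-1}$ are all polynomials in $\tau$, they commute, so $\Om(\tau)=\ii(1-\tau)(1+\tau)^{-1}=\ii(1+\tau)^{-1}(1-\tau)$, and therefore $(1+\tau)^{-1}(\tau-1)=\ii\,\Om(\tau)$. Using $\det(1+\tau)\neq 0$ to factor out the invertible matrix $1+\tau$, the definition (\ref{av_xis}) becomes $\Xi_\tau f=(1+\tau)\big(\pd{f}{y}-2k\pii\,\Om(\tau)y\,f\big)$, so $f\in\mathrm{Ker}\,\Xi_\tau$ if and only if $\pd{f}{y_j}=2k\pii\,(\Om(\tau)y)_j\,f$ for $j=1,\dots,g$. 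A one-line differentiation, using symmetry of $\Om(\tau)$, shows that $g_\tau(y):=\e^{k\pii\t y\Om(\tau)y}$ solves this system; moreover $|g_\tau(y)|=\e^{-k\pi\,\t y\,\im\Om(\tau)\,y}\le 1$, because $\im\Om(\tau)\ge 0$ for every $\tau\in\DD_g$ with $\det(1+\tau)\neq 0$, so $g_\tau$ is a bounded function and a fortiori a tempered distribution.

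For the reverse inclusion I would take an arbitrary solution $f\in\S'(\R^g)$ and use that $g_\tau$ is smooth and nowhere vanishing, so that $g_\tau^{-1}=\e^{-k\pii\t y\Om(\tau)y}$ is a smooth function; viewing $f$ as an ordinary distribution, the product $g_\tau^{-1}f$ is a well-defined element of $\D'(\R^g)$, and the Leibniz rule together with the equations for $f$ gives $\pd{}{y_j}(g_\tau^{-1}f)=0$ for all $j$. Since a distribution on $\R^g$ with vanishing gradient is constant, $g_\tau^{-1}f\equiv C$ and hence $f=C\,g_\tau$. Thus $\mathrm{Ker}\,\Xi_\tau=\C\,g_\tau$ is exactly one-dimensional, which completes the argument.

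I expect the only step that is not purely formal to be this uniqueness assertion: the temptation to divide by the distribution $f$ must be resisted, and one instead multiplies by the smooth nonvanishing factor $g_\tau^{-1}$, reducing to the elementary fact that a distribution with zero gradient is constant. I would also stress that the argument is uniform across the interior $\breve{\DD}_g$ and the part of $\partial\DD_g$ where $\det(1+\tau)\neq 0$: the single property used is $\im\Om(\tau)\ge 0$, which is exactly what makes $g_\tau$ bounded --- Gaussian-decaying in the interior, merely oscillatory along the degenerate directions on the boundary --- and hence a tempered distribution.
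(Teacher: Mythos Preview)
Your proof is correct and follows essentially the same line as the paper's: reduce via Lemma~\ref{av_ecc} to computing $\mathrm{Ker}\,\Xi_\tau$, factor $\Xi_\tau=(1+\tau)\big(\pd{}{y}-2k\pii\,\Om(\tau)y\big)$ using the invertibility of $1+\tau$, and conclude that the kernel is the one-dimensional line spanned by the Gaussian $\e^{k\pii\,\t y\Om(\tau)y}$. The paper simply invokes H\"ormander (Theorem~7.6.1) for this last step, whereas you spell out the uniqueness argument by multiplying by the smooth nowhere-vanishing function $g_\tau^{-1}$ inside $\D'(\R^g)$ and using that a distribution with zero gradient is constant; your extra care in noting that $g_\tau$ is bounded (hence tempered) for $\im\Om(\tau)\ge 0$, and in working in $\D'$ rather than $\S'$ for the division step, is exactly right.
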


\begin{proof}
Note that the Weil-Brezin coefficients in the statement of the lemma are
well-defined elements in $\S'(\R^g)$ even when the imaginary part of $\Om$
is only positive semi-definite, and that the dependence on $\tau$ is
continuous in this case.

By Lemma \ref{av_ecc} we have to identify the kernel of the operators
$\Xi_\tau$. Since $(1+\tau)$ is invertible and
\[
 \Xi_\tau = (\tau+1)\pd{}{y} - 2k\pi (\tau-1)y = (\tau+1)
 \left( \pd{}{y} - 2k\pii\Om y \right) ,
\]
the kernel of $\Xi_\tau$ on distributions is one-dimensional and is spanned
by the Gaussian in the statement of the lemma (see e.g. the proof of
Theorem 7.6.1 in \cite{Ho}).
\end{proof}

In order to treat the points where $\det(1+\tau)=0$, we now show that the
$Sp(2g,\R)$ action preserves the kernels of the operators $\Xi_\tau$. 

Consider the operators $ \Xi_\tau\otimes 1$ acting on $\S'(\R^g) \otimes
\delta$, where $\delta$ is the half-form bundle. Recall that, from
Proposition \ref{av_descent}, the action of the metaplectic group
$Mp(2g,\R)$ on $\Gamma^{-\infty}(L^k)\otimes \delta$ actually descends to
an action of $Sp(2g,\R)$. Note also that the group $Sp(2g,\R)$ acts
diagonally on the $g$ factors of $\S'(\R^g)$ in the Weil-Brezin expansion.

In fact, we now show that the action of $Sp(2g,\R)$ lifts to the Hilbert
quantum bundle $\Q$.

\begin{prop} Let, as above, $M=(P,L,Q)\in Sp(2g,\R)$ be a matrix with
$g\times g$ block entries $A,B,C,D$, and $\tau'=M(\tau)$; furthermore,
consider the matrix
\[
 X_{\tau'} := \frac 12\big( (\tau'+1)(A+\ii B) + (\tau'-1)(\ii C-D) \big)
\]
acting on a vector of $g$ elements of $\S'(\R^g)$. Then
\[
 (\Xi_{\tau'}\otimes 1) \circ M = X_{\tau'} \circ M \circ (\Xi_\tau \otimes 1) .
\]
In particular, the symplectic transformation $M$ preserves the kernels of the 
family of operators $\Xi_t$, which define the bundle $\Q$.
\end{prop}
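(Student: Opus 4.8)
The plan is to reduce the statement to an identity among the first-order differential operators $\Xi_\tau$ and the metaplectic operators, exploiting that all the relevant group actions are diagonal on the $g$ Weil-Brezin components. Since, by Proposition~\ref{av_descent} and the discussion preceding it, $Sp(2g,\R)$ acts diagonally on $\S'(\R^g)^g$ (and compatibly on the half-form factor), it suffices to verify the intertwining relation on a single copy of $\S'(\R^g)$. Concretely, write $M=(P,L,Q)$, lift it to a generator $S(P,L,Q)_m$ of $Mp(2g,\R)$ acting on $\S'(\R^g)$, and let $\tau'=M(\tau)$. I would first establish the relation under the extra hypothesis $\det(1+\tau)\neq 0$ and $\det(1+\tau')\neq 0$, so that one may pass through the upper half-space parameters $\Om=\Om(\tau)$, $\Om'=\Om(\tau')=M(\Om)$ via the Cayley transform \eqref{av_tausomegas}, and only afterward extend by continuity/density to the boundary.

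The core computation is the following. Recall from the proof of the preceding lemma that
\[
 \Xi_\tau = (\tau+1)\left(\pd{}{y}-2k\pii\,\Om\, y\right),
\]
so on the generic locus $\Xi_\tau$ has the same kernel as the operator $D_\Om := \pd{}{y}-2k\pii\,\Om\,y$, whose kernel is spanned by the Gaussian $\e^{k\pii\,\t y\Om y}$. By Lemma~\ref{av_gaussiana}, $S(P,L,Q)_m$ sends the Gaussian attached to $\Om$ to a scalar multiple of the Gaussian attached to $\Om'=M(\Om)$; differentiating (or, more cleanly, conjugating the operator $D_\Om$), one gets an intertwining of the form $D_{\Om'}\circ S = (\text{matrix factor})\circ S\circ D_\Om$. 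Tracking the prefactors $(\tau+1)$ and $(\tau'+1)$ coming from the two sides of $\Xi_{\tau'}\otimes 1 = X_{\tau'}\circ M\circ(\Xi_\tau\otimes 1)$, and substituting $\Om=\ii(1-\tau)(1+\tau)^{-1}$ together with equation \eqref{av_changetau} characterising $\tau'=M(\tau)$, the matrix $X_{\tau'}=\tfrac12\big((\tau'+1)(A+\ii B)+(\tau'-1)(\ii C-D)\big)$ should emerge exactly as the required conjugating factor. This is essentially a bookkeeping exercise translating the classical fact $D_{M(\Om)}\circ(\text{metaplectic lift}) \propto \t(C\Om+D)^{-1}\cdot(\text{metaplectic lift})\circ D_\Om$ into the Siegel-disc coordinate $\tau$.

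The main obstacle is the careful matching of the coordinate changes: equation \eqref{av_changetau} gives $\tau'$ only implicitly, so one must manipulate it algebraically — multiplying through by $(1+\tau)$, using $\Om+Q$, $(C\Om+D)$, etc. — to exhibit $X_{\tau'}$ as precisely the factor $\t L(\Om+Q)^{-1}L\cdot(\tau+1)$-type expression dictated by Lemma~\ref{av_gaussiana}. I expect the cleanest route is to conjugate $D_\Om$ by the Gaussian-kernel operator directly (a change of variables $v\mapsto \t L^{-1}(\cdot)$ in \eqref{av_maurice}), which turns $\pd{}{y}-2k\pii\Om y$ into $\pd{}{u}-2k\pii\Om' u$ up to a linear factor that one reads off as $\t L(\Om+Q)^{-1}L$; then one need only check $(\tau'+1)^{-1}X_{\tau'}(\tau+1)$ equals that factor, which is a finite linear-algebra identity given \eqref{av_changetau}. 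Finally, once the intertwining $\Xi_{\tau'}\otimes 1=X_{\tau'}\circ M\circ(\Xi_\tau\otimes 1)$ holds on the dense generic locus and all operators are continuous on $\S'(\R^g)$, it persists for all $\tau\in\DD_g$; and since $X_{\tau'}$ is invertible whenever $\det(1+\tau')\neq 0$ (and in general $M$ is an isomorphism), the relation immediately yields $M(\Ker\Xi_\tau)=\Ker\Xi_{\tau'}$, proving that $M$ preserves the family of kernels defining $\Q$.
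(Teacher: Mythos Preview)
Your proposal is correct in spirit and would succeed, but it takes a detour that the paper avoids. You factor $\Xi_\tau=(\tau+1)D_\Om$ on the locus $\det(1+\tau)\neq 0$, establish the intertwining $D_{\Om'}\circ S=\t L(\Om+Q)^{-1}\,(S\circ D_\Om)$ by conjugating through the integral kernel, translate back to $\Xi$'s, and finally extend to all of $\DD_g$ by continuity in $\tau$. The paper instead applies $\Xi_{\tau'}$ directly to $(Sf)(u)=\int e(u,v)f(v)\,\dd^g v$ without ever factoring out $(1+\tau)$: it uses the identity $u\,e(u,v)=L^{-1}Qv\,e(u,v)-\tfrac{1}{2k\pii}L^{-1}\partial_v e(u,v)$, integrates by parts, and reads off the coefficient matrices of $\partial_v f$ and $vf$. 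The two matrix equations so obtained, after adding and subtracting, become literally the definition of $X_{\tau'}$ and equation~\eqref{av_changetau}, so the identity holds for \emph{every} $\tau\in\DD_g$ without any limiting argument. Your route buys a cleaner conceptual picture (the classical $\t(C\Om+D)^{-1}$ intertwiner for $D_\Om$) at the cost of the extra invertibility hypothesis and the density step; the paper's route is a single uniform computation. One small caution: your appeal to Lemma~\ref{av_gaussiana} alone (that $S$ maps one Gaussian to another) gives only a statement about kernels, not an operator identity on all of $\S'(\R^g)$; you correctly note that the real work is the kernel conjugation, which is exactly the integration-by-parts step the paper performs.
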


\begin{proof} To shorten the expressions, set
\[
 e(u,v) := \e^{k\pii(\t u P u -2\t u \t L v + \t v Q v)} ;
\]
and calculate, using an arbitrary lifting $M_m\in Mp(2g,\Z)$ and Proposition
\ref{av_descent},
\begin{eqnarray*} && (\Xi_{\tau'}\otimes 1 \circ M)
 \left( f (u)\sqrt{\dd^g (x,y)_\tau} \right) = \\
 & = & \left((\tau'+1)\pd{M_m f}{u} - 2k\pi (\tau'-1)u M_m f(u)\right) 
 M_m (\sqrt{\dd^g (x,y)_\tau}) = \\
 & = & \int \left( (\tau'+1)\pd{e(u,v)}{u}-2k\pi(\tau'-1)u e(u,v) \right) f(v) \dd^g v
 \sqrt{\dd^g (x,y)_{\tau'}}  = \\
 & = & \int \left( (\tau'+1)2k\pii(Pu-\t Lv)-2k\pi(\tau'-1)u \right) e(u,v) f(v) \dd^g v
 \sqrt{\dd^g (x,y)_{\tau'}} .
\end{eqnarray*}
Using the fact that
\[
 u e(u,v) = L^{-1}Qv e(u,v)-\frac{1}{2k\pii}L^{-1}\pd{e(u,v)}{v}
\]
and integrating by parts, we find
\begin{eqnarray*} &&(\Xi_{\tau'}\otimes 1 \circ M)
 \left( f(u)\sqrt{\dd^g (x,y)_\tau} \right) =\\
 & = & 2k\pi\int \left[ \ii(\tau'+1)
 \left(P(L^{-1}Qv e(u,v)-\frac{1}{2k\pii}L^{-1}\pd{e(u,v)}{v})-\t Lv e(u,v)\right) - \right. \\
 & & \left. -(\tau'-1)(L^{-1}Qv e(u,v)-\frac{1}{2k\pii}L^{-1}\pd{e(u,v)}{v}) \right]
 f(v) \dd^g v \sqrt{\dd^g (x,y)_{\tau'}} \ = \\
 & = & 2k\pi\int \left[ \ii(\tau'+1)
 \left(P(L^{-1}Qv f(v)+\frac{1}{2k\pii}L^{-1}\pd{f}{v})-\t Lv f(v)\right) - \right. \\
 & & \left. -(\tau'-1)(L^{-1}Qv f(v)+\frac{1}{2k\pii}L^{-1}\pd{f}{v}) \right]
  e(u,v) \dd^g v \sqrt{\dd^g (x,y)_{\tau'}} \ = \\
 & = & \int \left[ \left( (\tau'+1)A+\ii(\tau'-1)C \right) \pd{f}{v}
 +2k\pi \left( \ii(\tau'+1)B-(\tau'-1)D \right)v f(v) \right] \\
 & &  e(u,v) \dd^g v \sqrt{\dd^g (x,y)_{\tau'}} .
\end{eqnarray*}
Therefore, to complete the proof it suffices to show that
\begin{eqnarray*}
 (\tau'+1)A+\ii(\tau'-1)C & = & X_{\tau'}(1+\tau) \\
 \ii(\tau'+1)B-(\tau'-1)D & = & X_{\tau'}(1-\tau) ,
\end{eqnarray*}
but these two equations are equivalent to the following
\begin{eqnarray*}
 (\tau'+1)(A+\ii B)+(\tau'-1)(\ii C-D) & = & 2 X_{\tau'} \\
 (\tau'+1)(A-\ii B)+(\tau'-1)(\ii C+D) & = & 2 X_{\tau'}\tau ,
\end{eqnarray*}
the first of which is the definition of $X_{\tau'}$, and the 
second one is precisely (\ref{av_changetau}).
\end{proof}

Now, we can use the metaplectic group action to show that the rank of
the bundle is constant over all of the Siegel disc $\DD_g$. Consider the
following section of $\Q\to\H_g\cong\breve{\DD}_g$,
\[
 \si^l_\Om := \th^l_\Om \sqrt{\dd^g z_\Om} =
 \widetilde{\th}^l_{\tau(\Om)}\sqrt{\dd^g (x,y)_{\tau(\Om)}} \in
 H^0(X_\Om,L^k_\Om)\sqrt{\dd^g z_\Om},
\]
where accordingly
\[
 \widetilde{\th}^l_\tau = \det (1+\tau)^{-\frac 12} \th^l_{\Om(\tau)},
\]
with $\th^l_\Om$ defined by (\ref{av_thetas}) for $\Om\in\H_g$ and where
the branch of the square root is the natural one, as in (\ref{av_branch}).

Putting things together, we obtain

\begin{thm}\label{av_teorema}
Each section $\widetilde{\th}^l_\tau$ extends continuously to a map $\DD_g\to
\Gamma^{-\infty}(L^k)$. In particular, $\tau \mapsto \{ \si^l_\tau \}_{l\in \Z^g/k\Z^g}$
is a set of global sections of $\Q$, which is therefore
trivialized and of rank $k^g$.

Furthermore, the elements of the trivializing frame $\{ \si^l_\tau \}_{l\in \Z^g/k\Z^g}$ for $\Q$ 
are invariant under the $Sp(2g,\R)$ action.
\end{thm}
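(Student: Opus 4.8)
The plan is to treat this as an assembly of the ingredients already in place, pushing everything to the boundary of $\DD_g$ by means of the $Sp(2g,\R)$-action and then upgrading to the full statement by density of $\H_g\cong\breve{\DD}_g$. First I would prove the $Sp(2g,\R)$-invariance of the frame over the interior. For $\Om\in\H_g$ and a metaplectic generator $S(P,L,Q)_m$ projecting to $M=(P,L,Q)\in Sp(2g,\R)$, with $\Om'=M(\Om)$, Lemma \ref{av_gaussiana} applied to each Weil-Brezin component (the group acting diagonally on them) gives $S(P,L,Q)_m\cdot\th^l_\Om=\ii^m|\det L|^{1/2}\det(\Om+Q)^{-1/2}\,\th^l_{\Om'}$, while the action on half-forms (\ref{av_actiononhalf}) contributes the reciprocal scalar $\ii^{-m}|\det L|^{-1/2}\det(\Om+Q)^{1/2}$; with the two square roots taken consistently (both fixed by the relevant Gaussian integrals) these factors cancel, so $S(P,L,Q)_m\cdot\si^l_\Om=\si^l_{\Om'}$. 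Since such generators generate $Mp(2g,\R)$ and the action descends to $Sp(2g,\R)$ by Proposition \ref{av_descent}, the frame $\{\si^l_\Om\}_{l\in\Z^g/k\Z^g}$ is $Sp(2g,\R)$-invariant on $\H_g$.

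Next I would extend $\widetilde{\th}^l_\tau$ continuously to all of $\DD_g$. On the open dense set $\{\tau:\det(1+\tau)\neq0\}$ the explicit formula (\ref{av_eccsoltau}) suffices: $\det(1+\tau)^{-1/2}$ is continuous, and $\Om\mapsto\e^{k\pii\t y\Om y}$ is continuous as an $\S'(\R^g)$-valued map on $\{\im\Om\geq0\}$ (clear when $\im\Om>0$; in the degenerate case one splits off the kernel of $\im\Om$ to write the Gaussian as a Schwartz function times a purely oscillatory tempered factor, cf.\ the proof of Theorem 7.6.1 in \cite{Ho}). At a boundary point $\tau_0$ with $\det(1+\tau_0)=0$ I would use that $\tau_0$ lies in some stratum of $\DD_g$, that $Sp(2g,\R)$ acts transitively on it, and that this stratum meets $\{\det(1+\tau)\neq0\}$ — e.g.\ a polarization of the form $\Om=\mathrm{diag}(\ii I,0)$, with block sizes dictated by the number of real directions of $\P_{\tau_0}$, lies in that stratum and has $\tau=\mathrm{diag}(0,I)$, $\det(1+\tau)\neq0$. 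Choosing $M\in Sp(2g,\R)$ with $\det(1+M(\tau_0))\neq0$, the map $\tau\mapsto M^{-1}\cdot\bigl(\widetilde{\th}^l_{M(\tau)}\sqrt{\dd^g(x,y)_{M(\tau)}}\bigr)$ is then defined and continuous on a neighbourhood of $\tau_0$ — $M$ is a homeomorphism of $\DD_g$ and acts continuously on $\Gamma^{-\infty}(L^k)\tns\sqrt{\dd^g(x,y)_.}$ over all of $\DD_g$ by Proposition \ref{av_descent} — and by the previous step it coincides with $\si^l_\tau$ on $\H_g$; hence $\si^l_\tau$, and thus $\widetilde{\th}^l_\tau$ via the continuous trivialization $\tau\mapsto\sqrt{\dd^g(x,y)_\tau}$ of the half-form bundle, extends continuously across $\tau_0$, the extension being unambiguous by density of $\H_g$.

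To conclude I would note that the condition defining $\widetilde{\Q}_\tau$ is closed in $\Gamma^{-\infty}(L^k)\times\DD_g$, since the operators $\Xi_\tau$ of Lemma \ref{av_ecc} depend polynomially on $\tau$; hence the extended $\widetilde{\th}^l_\tau$ lie in $\widetilde{\Q}_\tau$ and the $\si^l_\tau$ in $\Q_\tau$. They are linearly independent for every $\tau$: their Weil-Brezin coefficients occupy distinct components (the $\de_{l,l'}$), each nonzero — directly from (\ref{av_eccsoltau}) when $\det(1+\tau)\neq0$, and otherwise as the image of a nonzero Gaussian under the $\S'(\R^g)$-isomorphism induced by $M^{-1}$. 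So $\dim\widetilde{\Q}_\tau\geq k^g$, while the reverse inequality follows from the earlier dimension lemma when $\det(1+\tau)\neq0$ and, for the remaining $\tau$, from the isomorphism $\widetilde{\Q}_\tau\cong\widetilde{\Q}_{M(\tau)}$ provided by the Proposition that $M$ preserves the family of kernels of the $\Xi_t$; thus $\{\si^l_\tau\}_{l\in\Z^g/k\Z^g}$ is a global frame and $\Q$ has rank $k^g$. Finally, $\tau\mapsto M\cdot\si^l_\tau$ and $\tau\mapsto\si^l_{M(\tau)}$ are both continuous on $\DD_g$ and agree on the dense set $\H_g$ by the first step, hence agree everywhere, which gives the $Sp(2g,\R)$-invariance of the frame on all of $\DD_g$.

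The hard part will be the continuity of the extension at boundary points with $\det(1+\tau)=0$, where the Gaussian $\e^{k\pii\t y\Om(\tau)y}$ and the normalization $\det(1+\tau)^{-1/2}$ separately blow up (the limit being, morally, a Gaussian in the non-degenerate directions tensored with a delta in the degenerate ones); the symplectic-transport argument above is designed precisely to avoid analyzing this degenerate limit head-on, but it rests on two points that deserve care — that every stratum of $\DD_g$ genuinely meets the locus $\det(1+\tau)\neq0$, and that the $Sp(2g,\R)$-action on half-form-twisted distributional sections is honestly continuous up to and along the boundary of the closed Siegel disc, as recorded in Proposition \ref{av_descent}. A more computational alternative would replace this by a direct block-diagonal analysis of the limit, which is elementary but messier.
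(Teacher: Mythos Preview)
Your proposal is correct and follows essentially the same approach as the paper: move any $\tau_0\in\partial\DD_g$ by a symplectic transformation $M$ into the locus $\det(1+\tau)\neq0$ (the paper simply cites Proposition~5.4.7 in \cite{Wo} for the existence of such $M$, whereas you argue via transitivity on strata plus an explicit representative), transport the already-constructed continuous frame back via the continuous $Sp(2g,\R)$-action on $\Gamma^{-\infty}(L^k)\tns\sqrt{\dd^g(x,y)_.}$, and deduce invariance from Lemma~\ref{av_gaussiana} combined with~(\ref{av_actiononhalf}). Your write-up is considerably more explicit than the paper's two-line proof---in particular you spell out the cancellation of scalars, the closedness argument placing the limits in $\widetilde{\Q}_\tau$, linear independence via the Weil-Brezin support pattern, and the density argument extending invariance to the boundary---but these are exactly the details the paper leaves implicit.
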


\begin{proof} From Proposition 5.4.7 in \cite{Wo}, for any $\tau\in\partial\DD_g$,
there exists a symplectic transformation $M\in Sp(2g,\R)$  such that $M(\tau)$
does not have eigenvalue $-1$. Since $M^{-1}$ acts continuously on $\Q$ by the previous proposition, 
the assertion follows.

The invariance of the sections $\si^l_\tau$ under the $Sp(2g,\R)$ action is an immediate consequence of Lemma 
\ref{av_gaussiana} and of (\ref{av_actiononhalf}).
\end{proof}

\begin{rem}
For the case of reducible polarizations, the dimension $k^g$ coincides with the one
obtained by considering \'{S}niatycki's quantization procedure \cite{Sn,Ma} where one
considers smooth sections of the restriction of $L^k$ to Bohr-Sommerfeld fibers. In
fact, solutions of the weak equations of covariant constancy must be supported along 
fibers of trivial holonomy, and polarized sections are linear combinations of Dirac
delta distributions with phase variation along the Bohr-Sommerfeld fibers.
\end{rem}

\begin{ex}
\label{av_-1} 
{}For the case of the horizontal polarization, given by $\tau=-1$, the basis
$\{\sigma^l_{-1}\}_{l\in (\Z/k\Z)^g}$ consists of
distributional sections, each supported on a single compact Bohr-Sommerfeld fiber of the vertical
polarization: this follows immediately from the equations of covariant constancy written in terms of the Weil-Brezin coefficients.
Since $\tau = -1$, the operator $\Xi_{-1}$ in (\ref{av_xis}) is just multiplication by $4k\pi y$, and
its kernel is generated by the Dirac delta distribution supported at $y=0$.
Therefore, from (\ref{av_wb2}), the corresponding sections $\sigma^l_{-1}$ (each of which has a single non-zero
Weil-Brezin coefficient) will be supported on the points
with the components of $y$ being congruent to $\frac{l}{k}$. These points define the $k^g$ Bohr-Sommerfeld
fibers of the vertical polarization.
Since each basis element is supported on a single Bohr-Sommerfeld fiber, $\{\sigma^l_{-1}\}_{l\in (\Z/k\Z)^g}$  is a so-called Bohr-Sommerfeld basis
\cite{Ty} for $\Q_{-1}$.
\end{ex}

In order to describe Bohr-Sommerfeld basis for other reducible polarizations, it is convenient to study a natural geometric action
of the integer symplectic group $Sp(2g,\Z)$. For $M\in Sp(2g,\Z)$ and $\tau$ a reducible polarization, this action 
transforms Bohr-Sommerfeld leaves of the polarization $\tau$ into Bohr-Sommerfeld leaves of the 
reducible polarization $M(\tau)$. We will describe and study some properties of this action, and of its 
lift to the quantum bundle, in a forthcoming paper \cite{BMN}. In fact, that study amounts to a study of the 
algebro-geometric theta transformation formula from a symplectic point of view.

In the following Section, we study the Hilbert space structure on the fibers
of the bundle $\Q\to\DD_g$, and how they are related for different fibers.

\section{The BKS pairing}
\label{avbks}

\subsection{The BKS pairing on the extended quantum bundle}
\label{av_subsbks}

By the very construction of the quantum bundle $\Q$, over the interior of
the Siegel disc the BKS pairing is already implemented as the product of the
pairing of square integrable sections with the pairing of half-forms.
Explicitly, one has

\begin{thm} 
\label{av_obama}
For $\Om,\Om'\in\H_g$, the BKS pairing is given by
\be
 \langle \vartheta^l_\Om\tns\sqrt{\dd^g z_\Om},
 \vartheta^{l'}_{\Om'}\tns\sqrt{\dd^g z_{\Om'}} \rangle_{BKS} =
  2^{-\frac{g}{2}} k^{-g}\de_{l,l'} .
\ee
\end{thm}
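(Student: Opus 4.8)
The plan is to compute the BKS pairing of $\vartheta^l_\Om\tns\sqrt{\dd^g z_\Om}$ with $\vartheta^{l'}_{\Om'}\tns\sqrt{\dd^g z_{\Om'}}$ as the product of two independent factors: the prequantum factor $\langle \vartheta^l_\Om, \vartheta^{l'}_{\Om'}\rangle$ computed in $\Gamma_{L^2}(L^k)$ via the unitary Weil--Brezin identification (\ref{av_unit}), and the half-form factor $\langle \sqrt{\dd^g z_\Om}, \sqrt{\dd^g z_{\Om'}}\rangle$ given by (\ref{av_ep}). This splitting is legitimate here precisely because, as stressed in the text, the group structure on $(\T^{2g},\om)$ forces the BKS integral to factorize; so the proof reduces to two Gaussian integrals.

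First I would handle the prequantum factor. Recall from (\ref{av_thetas})--(and the remark following it) that the Weil--Brezin coefficients of $\vartheta^l_\Om$ are $(\vartheta^l_\Om)_{l'}(y) = \de_{l l'}\,\e^{k\pii\,\t y\Om y}$. Hence by (\ref{av_unit}),
\[
 \langle \vartheta^l_\Om, \vartheta^{l'}_{\Om'}\rangle
 = \sum_{j\in(\Z/k\Z)^g} \de_{lj}\de_{l'j}\int_{\R^g} \e^{k\pii\,\t y\Om y}\,\ol{\e^{k\pii\,\t y\Om' y}}\,\dd^g y
 = \de_{l,l'}\int_{\R^g} \e^{-k\pii\,\t y(\Om-\ol{\Om'})y}\,\dd^g y,
\]
and the Gaussian integral (which converges because $\im(\Om-\ol{\Om'}) = \im\Om + \im\Om' > 0$) evaluates, exactly as in the displayed formula preceding (\ref{av_ep}), to $\de_{l,l'}\left(\det\frac{1}{2k\ii}(\Om-\ol{\Om'})\right)^{-1/2}$. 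The half-form factor is then read off directly from (\ref{av_ep}): it is $\left(\det\frac{1}{2k\ii}(\Om-\ol{\Om'})\right)^{1/2}$. Multiplying, the two determinant factors cancel — and, crucially, the two square-root branches are the same one (both fixed by the same Gaussian-integral convention of Section \ref{av_halfform}), so the cancellation is exact with no residual sign — leaving $\de_{l,l'}$ times whatever normalization constants the BKS pairing prescription attaches.

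The remaining step, and the one I expect to be the main obstacle, is pinning down the overall constant $2^{-g/2}k^{-g}$. This requires being careful about (i) the factor $2^{-g}\ii^g(-1)^{g(g-1)/2}$ and the Liouville normalization $(k\om)^g/g!$ built into the half-form pairing (\ref{av_wedge}), versus (ii) the normalization $\om^n/n!$ used in the prequantum pairing $\iota s(\phi)=\int_U s\phi\,\om^n/n!$, and reconciling these with the ``unitary'' normalization in (\ref{av_unit}) which was stated with plain Lebesgue measure $\int_{\R^g}(s)_l\ol{(s')_l}$. In other words, the $k^{-g}$ presumably comes from the $(k\om)^g$ (equivalently the change of variables in the Weil--Brezin transform at level $k$) and the $2^{-g/2}$ is the leftover half of the $2^{-g}$ in (\ref{av_wedge}) after one power of $2$ is absorbed into matching the two determinant branches; I would track these bookkeeping constants explicitly by testing on $g=1$, $\Om=\Om'=\ii$, $l=l'=0$, where everything is an elementary one-dimensional Gaussian, and then invoke the already-established factorization to conclude in general. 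Once the constant checks out in that base case, the general formula follows since the $\Om,\Om'$-dependence has already been shown to cancel completely.
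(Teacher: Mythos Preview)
Your approach is exactly the paper's: compute the prequantum pairing $\langle\vartheta^l_\Om,\vartheta^{l'}_{\Om'}\rangle$ via the Weil--Brezin isomorphism (\ref{av_unit}) as a Gaussian integral, then multiply by the half-form factor (\ref{av_ep}). No further argument is required.

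However, your Gaussian computation contains a slip that sends you hunting for the constant in the wrong place. The integrand in the prequantum pairing is $\e^{k\pii\,\t y(\Om-\ol{\Om'})y}$, whereas the displayed formula preceding (\ref{av_ep}) defines $(\det\frac{1}{2k\ii}(\Om-\ol{\Om'}))^{-1/2}$ as the integral of $\e^{\frac{\pii}{2k}\,\t\xi(\Om-\ol{\Om'})\xi}$. These are \emph{not} the same Gaussian: the exponents differ by a factor of $2k^2$. Evaluating the prequantum integral correctly gives (this is the paper's equation (\ref{av_nice}))
\[
 \langle\vartheta^l_\Om,\vartheta^{l'}_{\Om'}\rangle
 = \de_{l,l'}\,(k\ii)^{-\frac{g}{2}}\big(\det(\ol{\Om'}-\Om)\big)^{-\frac12},
\]
not $\de_{l,l'}(\det\frac{1}{2k\ii}(\Om-\ol{\Om'}))^{-1/2}$. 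When you now multiply by the half-form factor $(\det\frac{1}{2k\ii}(\Om-\ol{\Om'}))^{1/2} = (2k\ii)^{-g/2}(\det(\Om-\ol{\Om'}))^{1/2}$, the determinants cancel and the scalar prefactors combine to $2^{-g/2}k^{-g}$ directly. So the constant is produced by the two Gaussian integrals themselves, not by any mismatch between the Liouville normalization in (\ref{av_wedge}), the prequantum measure, or the Weil--Brezin unitarity convention (\ref{av_unit}); your proposed bookkeeping detour through those normalizations, and the base-case check at $\Om=\Om'=\ii$, are unnecessary.
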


\begin{proof} We have to calculate the pairing $\langle\th^l_\Om,\th^{l'}_{\Om'}\rangle$
in the prequantum Hilbert space; from the unitary isomorphism (\ref{av_unit}) between
the space of square integrable sections of $L^k$ and $(L^2 (\R^g))^k$ that restricts to
the Weil-Brezin expansion on smooth sections, and the Gaussians in (\ref{av_eccsoltau})
it is clear that this gives
\be
\label{av_nice}
 \langle\th^l_\Om,\th^{l'}_{\Om'}\rangle
= \de_{l,l'} (k\ii)^{-\frac{g}{2}} \Big( \det (\ol{\Om'}-\Om)
 \Big)^{-\frac{1}{2}} , 
\ee
which, combined with (\ref{av_ep}) proves this result.
\end{proof}

Note that from Theorem \ref{av_teorema}, the corresponding BKS pairing map 
$$B_{\Om,\Om'}:\Q_\Om\to\Q_{\Om'},$$ defined by 
\be
 \langle B_{\Om,\Om'}\sigma,\sigma'\rangle_{\Q_{\Om'}} = \langle\sigma,\sigma'\rangle_{BKS} \qquad \forall 
\sigma\in\Q_\Om, \sigma'\in\Q_{\Om'}, \nonumber
\ee
corresponds to the action on Weil-Brezin coefficients of the element of $Sp(2g,\R)$ relating $\Om$ and $\Om'$. 
It follows that the family of indexed basis
 $\{\sigma^l_\Om\}_{l\in \Z^g/k\Z^g}$ is parallel with respect to the family of pairing maps.

Moreover, from Theorem \ref{av_teorema}, the BKS pairing can be extended continuously to the
boundary of $\DD_g$, so that, $\forall \tau,\tau'\in \DD_g$, we can define unitary BKS
pairing maps 
\begin{eqnarray}
\nonumber
B_{\tau, \tau'}&:& \Q_\tau\to\Q_{\tau'}\\
B_{\tau, \tau'}(\sigma^l_\tau)&=&\sigma^l_{\tau'}, \forall l\in \Z^g/k\Z^g.
\end{eqnarray}

\begin{rem}
\label{b}
The pairing between elements of $\Q_\tau$ and $\Q_{\tau'}$, for
$\tau\in \DD_g$ and $\tau'\in \breve{\DD}_g$, as in Theorem \ref{av_obama}
is still given by evaluating the distributional sections in $\Q_\tau$ on
the conjugate of the smooth sections in $\Q_{\tau'}$ and multiplying with
the factor that arises from half-form correction. In this situation, for $\sigma\in \Q_\tau$ 
and $\sigma'\in\Q_{\tau'}$,
$$
\langle\sigma,\sigma'\rangle_{BKS} = \sigma (\ol{\sigma'}) = \ol{\sigma'} \sigma (1),
$$
where in the last term the distribution $\ol{\sigma'}\sigma$ is evaluated at the constant function $1$.
\end{rem}

Therefore, we have

\begin{cor} \label{av_p211} The family of indexed bases
\be
 \tau\mapsto\{\sigma^l_\tau\}_{l\in \Z^g/k\Z^g} \subseteq \Q_\tau, \tau\in \DD_g \nonumber
\ee
is parallel with respect to the transitive family of unitary pairing maps
$$B_{\tau, \tau'}: \Q_\tau \longrightarrow \Q_{\tau'},$$ 
$\tau,\tau'\in \DD_g$, defined by
\be
 \langle B_{\tau,\tau'}\sigma,\sigma'\rangle_{\Q_{\tau'}} = \langle \sigma,\sigma'\rangle_{BKS}
  \qquad \forall \sigma\in\Q_\tau, \sigma'\in\Q_{\tau'}. \nonumber
\ee
\end{cor}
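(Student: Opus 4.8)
The plan is to assemble Corollary~\ref{av_p211} almost entirely from results already in place, treating it as a packaging statement rather than a computation. First I would recall that Theorem~\ref{av_teorema} provides, for every $\tau\in\DD_g$, the explicit frame $\{\sigma^l_\tau\}_{l\in\Z^g/k\Z^g}$ of $\Q_\tau$, obtained as the continuous extension to the boundary of the sections $\sigma^l_\Om=\th^l_\Om\sqrt{\dd^g z_\Om}$ over $\H_g$, and that this frame is $Sp(2g,\R)$-invariant. The pairing $\langle\,\cdot\,,\,\cdot\,\rangle_{BKS}$ is the one of Theorem~\ref{av_obama} and Remark~\ref{b}, namely $\langle\sigma,\sigma'\rangle_{BKS}=\ol{\sigma'}\sigma(1)$ when at least one of $\sigma,\sigma'$ is smooth. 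The only genuine content is: (i) this pairing admits a continuous extension to all pairs $\tau,\tau'\in\DD_g$; (ii) the resulting pairing maps $B_{\tau,\tau'}$ are unitary; (iii) they are transitive, i.e.\ $B_{\tau',\tau''}\circ B_{\tau,\tau'}=B_{\tau,\tau''}$; and (iv) the frame is parallel, $B_{\tau,\tau'}(\sigma^l_\tau)=\sigma^l_{\tau'}$.

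Second, I would establish (i) and (iv) simultaneously by computing the pairing on the distinguished frame. For $\Om,\Om'\in\H_g$ Theorem~\ref{av_obama} gives $\langle\sigma^l_\Om,\sigma^{l'}_{\Om'}\rangle_{BKS}=2^{-g/2}k^{-g}\de_{l,l'}$, a constant independent of $\Om,\Om'$. Since by Theorem~\ref{av_teorema} each $\sigma^l_\tau$ depends continuously on $\tau\in\DD_g$ with values in $\Gamma^{-\infty}(L^k)\otimes\delta$, and since the pairing in the form $\ol{\sigma'}\,\sigma(1)$ is separately continuous whenever one factor stays in the smooth (or at least in a suitable interior) locus, the identity $\langle\sigma^l_\tau,\sigma^{l'}_{\tau'}\rangle_{BKS}=2^{-g/2}k^{-g}\de_{l,l'}$ persists for all $\tau,\tau'\in\DD_g$; this is exactly the extension asserted after Remark~\ref{b}. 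With the pairing pinned down on a frame, (iv) is immediate: defining $B_{\tau,\tau'}$ by $\langle B_{\tau,\tau'}\sigma,\sigma'\rangle_{\Q_{\tau'}}=\langle\sigma,\sigma'\rangle_{BKS}$, we test against $\sigma'=\sigma^{l'}_{\tau'}$ and read off that $B_{\tau,\tau'}(\sigma^l_\tau)$ has the same coordinates in the frame $\{\sigma^{l'}_{\tau'}\}$ as $\sigma^l_\tau$ has in $\{\sigma^l_\tau\}$, hence $B_{\tau,\tau'}(\sigma^l_\tau)=\sigma^l_{\tau'}$, which is parallelism.

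Third, unitarity and transitivity then follow formally. Since $\{\sigma^l_\tau\}_l$ is (after the normalization constant $2^{-g/4}k^{-g/2}$) an orthonormal basis of each $\Q_\tau$ — this is the content of equation~(\ref{av_nice}) combined with~(\ref{av_ep}), i.e.\ the half-form factor exactly cancels the would-be divergence of the prequantum pairing — and since $B_{\tau,\tau'}$ carries the orthonormal frame of $\Q_\tau$ bijectively onto that of $\Q_{\tau'}$ preserving coordinates, it is an isometry of Hilbert spaces, hence unitary. Transitivity is now trivial: both $B_{\tau',\tau''}\circ B_{\tau,\tau'}$ and $B_{\tau,\tau''}$ send $\sigma^l_\tau$ to $\sigma^l_{\tau''}$, and two unitary maps agreeing on a basis coincide. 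I would close by remarking, as the text already does, that this matches the $Sp(2g,\R)$-picture: for $\Om,\Om'$ related by $M\in Sp(2g,\R)$ the map $B_{\Om,\Om'}$ is the one induced on Weil--Brezin coefficients by the metaplectic lift of $M$ (Lemma~\ref{av_gaussiana}), so transitivity of the family reflects the group law in $Mp(2g,\R)$.

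The main obstacle is item~(i): justifying that the BKS pairing, which in the interior is literally $\langle\,\cdot\,,\,\cdot\,\rangle$ of $L^2$-sections times a half-form determinant, extends \emph{continuously} to pairs of points on $\partial\DD_g$, where both sections are genuinely distributional (e.g.\ Dirac masses supported on Bohr--Sommerfeld fibers, as in Example~\ref{av_-1}) and the naive product of two distributions is ill-defined. The resolution is not to take a limit of the integral formula but to \emph{define} the extended pairing by continuity of the frame — Theorem~\ref{av_teorema} guarantees $\tau\mapsto\sigma^l_\tau$ is continuous into $\Gamma^{-\infty}(L^k)\otimes\delta$ — and then to verify that this definition is consistent with the interior one and reproduces the value $\ol{\sigma'}\sigma(1)$ whenever that expression makes sense (one factor smooth), which is precisely Remark~\ref{b}. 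So the real work has already been done upstream; the corollary is the bookkeeping that turns the constant-on-the-frame computation into the three structural statements unitary, transitive, parallel.
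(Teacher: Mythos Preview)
Your proposal is correct and follows essentially the same route as the paper: the corollary is stated there without a separate proof, as an immediate consequence of Theorem~\ref{av_obama} (the constant pairing $2^{-g/2}k^{-g}\de_{l,l'}$ on the frame over $\H_g$), Theorem~\ref{av_teorema} (continuous extension of the frame to $\partial\DD_g$), and the explicit definition $B_{\tau,\tau'}(\sigma^l_\tau)=\sigma^l_{\tau'}$ given just before the corollary. Your expansion of the implicit steps---extending the pairing by continuity of the frame rather than by a limit of integrals, then reading off parallelism, unitarity, and transitivity from the fact that an orthogonal frame is carried to an orthogonal frame---is exactly the bookkeeping the paper leaves to the reader.
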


These results show that symplectic tori also provide examples of symplectic manifolds where
quantizations in different polarizations can be related by transitive unitary BKS pairing
maps, if the half-form correction is included.

\begin{rem} From a different but related perspective, the unitary maps 
$B_{\Omega,\Omega'}$ for $\Omega,\Omega'\in \H_g$ can also be realized as coherent state 
transformations associated to different K\"ahler structures on 
the complex Lie group $(\C^*)^g$. (See \cite{FMN1} and also \cite{Hal, FMMN}.)
\end{rem}

\begin{rem} These results extend straightforwardly to non-principally polarized abelian varieties. An
interesting application is then to the moduli space of rank $n$ semistable (degree zero)
vector bundles on an elliptic curve $E_\Om$, for $\Om\in \H_1$. This moduli space is
isomorphic to ${\mathbb P}^{n-1}$ and can be realized as a quotient of the (non-principally
polarized) abelian variety $M= E_\Om \otimes \check \Lambda_R$ by the Weyl group of
$SL_n(\C)$, ${\mathbb P}^{n-1}\cong M/W$, where $\check \Lambda_R$ is the corresponding
co-root lattice. Non-abelian theta functions in genus one are then realized as $W$
anti-invariant theta functions on $M$. The unitary equivalence between spaces of
non-abelian theta functions, in genus one, associated with different complex structures
studied in \cite{AdPW,FMN2}, can therefore be formulated equivalently in terms of BKS pairing maps.
\end{rem}

\subsection{Further properties of the BKS pairing}
\label{further}

Recall that the subgroup of translations of $X_\tau$ preserving the holomorphic structure 
on $L^k$ has a central extension, given by the ``finite'' Heisenberg group $H_k$ which 
is given by
\[
H_k =  \{(\lambda,(a,b)) | \lambda\in U(1), (a,b)\in (\Z/k\Z)^{2g}\},
\]
with the group law
\[
 (\lambda,(a,b))(\lambda',(a',b'))=(\lambda\lambda' \e^{\frac{\pii}{k} (ab'-ba')}, a+a', b+b').
\]
$H_k$ acts naturally on $H^0(L^k)\cong \Q_\tau$ by
$$
(\lambda,(a,b))\sigma(x,y)=\lambda \e^{\pii \omega ((x,y),(a,b))} \sigma(x-\frac{a}{k}, y-\frac{b}{k}).
$$

This well-known natural algebro-geometric irreducible unitary representation of $H_k$ on $\Q_\tau$, 
which is unique up to isomorphism,
for $\tau \in \breve\DD_g$ (see for example Section 6.4  of \cite{BL}, or \cite{Po}) is given in the 
parallel basis by
\ba
\nonumber
(\lambda,(a,0))\sigma^l_\tau &=& \lambda \e^{- 2\pii la /k} \sigma^l_\tau\\
\nonumber
(\lambda,(0,b))\sigma^l_\tau &=& \lambda \sigma^{l+b}_\tau.
\ea

It is then natural to consider this representation for all $\tau\in \DD_g$, including 
degenerate points on $\partial \DD_g$. 

\begin{rem}
\label{av_int}From Corollary \ref{av_p211} it follows immediately  that
the unitary BKS pairing maps $B_{\tau\tau'}:\Q_\tau\to \Q_{\tau'}$ intertwine the canonical 
representations of $H_k.$
\end{rem}

We will now show that the BKS pairing between transverse real polarizations is, as expected, 
given by an intersection pairing.
Let $\tau\in \partial \DD_g$ define a real reducible polarization $\P_\tau$ and let 
$BS\subset \T^{2g}$ be the union of its Bohr-Sommerfeld fibers.  
 Recall from Remark \ref{localtoric} that if $\sigma\in \Q_\tau$ then 
the support of $\sigma$ is contained in $BS$.

\begin{prop}
\label{intersection}
Let $\tau,\tau'\in \partial \DD_g$ be such that 
$\P_\tau,\P_{\tau'}$ are real, reducible and transverse. If $\sigma\in\Q_\tau,\sigma'\in\Q_{\tau'}$ 
then the pairing $\langle \sigma,\sigma' \rangle_{BKS}$ is obtained by evaluating a distribution supported 
in $BS\cap BS'$ on the constant function $1$. 
\end{prop}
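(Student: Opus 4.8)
The plan is to reduce the pairing, via Remark~\ref{b}, to the evaluation of a product distribution and then localize the support of that product. First I would recall from Remark~\ref{localtoric} (and the Lemma preceding Theorem~\ref{av_teorema}) that since $\P_\tau$ is real and reducible, every $\sigma\in\Q_\tau$ is a finite linear combination of the basis sections $\sigma^l_\tau$, and each of these is supported on the union $BS$ of the Bohr-Sommerfeld fibers of $\P_\tau$; similarly $\sigma'\in\Q_{\tau'}$ is supported on $BS'$. By Remark~\ref{b}, since $\tau,\tau'\in\partial\DD_g$, the BKS pairing is $\langle\sigma,\sigma'\rangle_{BKS}=\ol{\sigma'}\sigma(1)$, i.e.\ the evaluation on the constant function $1$ of the distribution $\ol{\sigma'}\sigma$, provided this product of distributions makes sense.

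The key step is therefore to show that the product $\ol{\sigma'}\sigma$ is a well-defined distribution and that it is supported in $BS\cap BS'$. For the support statement, the general principle is that $\mathrm{supp}(\ol{\sigma'}\sigma)\subseteq\mathrm{supp}(\sigma)\cap\mathrm{supp}(\sigma')\subseteq BS\cap BS'$, once the product is defined. To make the product well-defined I would work in the Weil-Brezin picture: writing $(\sigma^l_\tau)_{l'}=\de_{l,l'}\det(1+\tau)^{-1/2}\e^{k\pii\,{}^t y\,\Om(\tau)y}$ as in~(\ref{av_eccsoltau}), and using Example~\ref{av_-1} together with the $Sp(2g,\Z)$ action, each basis section of a reducible polarization is (a push-forward under (\ref{av_wb2}) of) a finite sum of Dirac-type distributions with smooth phase along a Bohr-Sommerfeld fiber — concretely, for $\tau=-1$ it is a delta at $y\in\frac{1}{k}\Z^g$, and a general reducible $\tau$ is obtained from this by an integer symplectic transformation, which changes the fiber but keeps it a compact affine-rational subtorus. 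Transversality of $\P_\tau$ and $\P_{\tau'}$ means the fibers meet transversally, so locally one fiber is a graph over a complementary set of coordinates to the other; then $\sigma$ is (locally) a delta in one set of coordinates times a smooth function, $\ol{\sigma'}$ a delta in the complementary coordinates times a smooth function, and their product is a genuine distribution (a delta at the finitely many transverse intersection points, carrying the two phases), whose support is exactly $BS\cap BS'$, a finite set of points. Evaluating on $1$ then gives a finite sum over $BS\cap BS'$ — the intersection pairing.

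The main obstacle I anticipate is the justification that the product $\ol{\sigma'}\sigma$ is meaningful: multiplication of distributions is not defined in general, and one must invoke transversality to guarantee that the wavefront sets (or, more elementarily, the explicit local structure of these sections as deltas-times-smooth-functions along complementary directions) are in the right position. Since the paper has already committed, in Remark~\ref{b}, to the formula $\langle\sigma,\sigma'\rangle_{BKS}=\ol{\sigma'}\sigma(1)$ for $\tau'$ in the interior, the cleanest route is to define $\langle\sigma,\sigma'\rangle_{BKS}$ at boundary points as the continuous extension from Theorem~\ref{av_teorema} and then \emph{verify} that this extension agrees with the intersection pairing: using the parallel frame, $\langle\sigma^l_\tau,\sigma^{l'}_{\tau'}\rangle_{BKS}=\langle\sigma^l_{\tau'},\sigma^{l'}_{\tau'}\rangle_{\Q_{\tau'}}$ which can be read off, and then one checks by direct local computation that the same numbers arise from integrating the product delta-distribution over $BS\cap BS'$. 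I would spell this out in one representative local chart (by $Sp(2g,\Z)$-equivariance and Example~\ref{av_-1} one may assume one of the polarizations is the vertical one), where the computation reduces to the elementary fact that $\delta_0\otimes 1$ paired against $1\otimes\delta_0$ on $\R^g=\R^{g_1}\times\R^{g_2}$ is a finite sum of point evaluations; the general transverse case follows by applying an element of $Sp(2g,\Z)$.
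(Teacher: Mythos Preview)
Your proposal follows the same overall architecture as the paper's proof: identify the sections as smooth-phase-times-delta distributions on Bohr--Sommerfeld fibers (via Remark~\ref{localtoric}), show that transversality makes the product $\sigma\ol{\sigma'}$ a well-defined distribution supported in $BS\cap BS'$, and then link its evaluation on~$1$ to the BKS pairing through Remark~\ref{b} and continuity.

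The difference is in the technical engine. The paper invokes H\"ormander's wavefront-set criterion (Theorem~8.2.4 and Example~8.2.11 of \cite{Ho}) to handle both steps at once: transversality of the polarizations puts the wavefront sets of the two codimension-$g$ deltas in the right relative position, so the product exists, is supported in the intersection of supports, and---crucially---depends continuously on the factors. That continuity, combined with Remark~\ref{b} applied to interior points converging to the boundary, immediately yields $\sigma\ol{\sigma'}(1)=\langle\sigma,\sigma'\rangle_{BKS}$ without any explicit computation. Your hands-on route (local coordinates, deltas in complementary directions) correctly establishes existence and support of the product, but then you still owe the continuity link; your fallback of computing both sides in the parallel frame and checking agreement is valid but more laborious than the paper's one-line appeal to the continuity statement already built into H\"ormander's theorem. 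One further caution: the $Sp(2g,\Z)$-reduction you invoke to put one polarization into vertical form relies on the lift of that action to the quantum bundle, which this paper explicitly defers to \cite{BMN}; so that normalization would need to be argued purely locally, as you hint, rather than via global equivariance.
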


\begin{proof}
{}From Remark \ref{localtoric} and \cite{BFMN}, one has that $\sigma$ is a linear combination of 
smooth phases multiplying codimension $g$ Dirac delta distributions supported in $BS$, and similarly 
for $\sigma'$. Since, $\P_\tau$ and $\P_{\tau}$ are transverse, Theorem 8.2.4 and Example 8.2.11 
of \cite{Ho} guarantee that the product $\sigma \ol{\sigma'}$ gives a well defined distribution on $\T^{2g}$, 
supported  in $BS\cap BS'$. From the continuity of the product (see Theorem 8.2.4 in \cite{Ho}) and 
Remark \ref{b}, it is clear that $\sigma \ol{\sigma'}(1)=\langle \sigma,\sigma' \rangle_{BKS}$.  
\end{proof}

\begin{rem}
In \cite{Ma}, Manoliu has considered the quantization of the standard 
symplectic torus in real reducible polarizations at even level $k$. She introduces geometrically 
an intersection pairing between quantization spaces for two reducible (transverse) polarizations  
via the intersection points of the respective Bohr-Sommerfeld leaves. After including 
the half-form correction these pairings are unitary.
\end{rem}

\begin{rem}
Another particularly interesting point on $\partial \DD_g$ is the one corresponding to the 
vertical polarization, $\tau = 1$. 
In this case, the period matrix $\Omega =0$ and the theta 
functions become linear combinations of Dirac delta distributions
supported on Bohr-Sommerfeld fibers. Remarkably, the linear combination 
coefficients are precisely given by the modular 
transformation $S$-matrix for characters of the affine Lie algebra 
$u(1)_k$ \cite{FMN1}. Thus, in terms of Bohr-Sommerfeld basis, 
the BKS pairing map $B_{-1,1}$ is exactly represented by this modular transformation matrix. 
\end{rem}

\section{Tropical theta divisors}
\label{tropic}

Tropical geometry is known to appear in the description of degeneration data of 
boundary points in the compactification of the moduli space of polarized abelian varities 
\cite{AN}. In \cite{MZ}, the authors consider tropical curves, their Jacobians as well 
as tropical theta functions and tropical theta divisors.
Tropical geometry is also known to arise when one degenerates K\"ahler metrics. In 
this section, we will comment briefly on the appearence of tropical geometry as 
one takes the complex structure of $\T^{2g}$ to the boundary of $\DD_g$.

Recall that tropical geometry is the algebraic geometry of curves over the tropical semi-field 
$\R\cup \{-\infty\}$, where the operations are defined by $x\oplus y = \max \{x,y\}$ and 
$x\odot y = x+y$, for $x,y\in \R\cup \{-\infty\}$. The evaluation of a 
polynomial in $n$ variables over this ring defines a piecewise linear map from 
$(\R\cup \{-\infty\})^n\to \R\cup \{-\infty\}$. The associated tropical affine variety 
is then given by the set of points where this function is not smooth, which is a piecewise
linear set. 

Let us now describe several examples of metric degeneration of K\"ahler structure at a 
real reducible polarization. Namely, for computational simplicity, we will 
choose the point $\tau=-1$. We will see that the outcome depends considerably on 
the path in $\DD_g$ through which $\tau$ approaches the point $\tau = -1\in\partial\DD_g$. 

From the expression (\ref{av_JOm}) for the complex structure defined by $\Om\in\H_g$,
it is clear that the corresponding K\"ahler metric $\ga_\Om = \om(.,J_\Om.)$ is
given by the matrix
\be
\label{metric}
 \ga_\Om = \mtrx{ \Om_2^{-1} & -\Om_2^{-1}\Om_1 \\
 -\Om_1\Om_2^{-1} & \Om_1\Om_2^{-1}\Om_1+\Om_2} .
\ee

\begin{ex}
\label{ex1}
For the standard hyperbolic metric in ${\H_g}$ invariant under $Sp(2g,\R)$, 
the geodesics rays going to the point at 
infinity corresponding to $\tau =-1$ are given by
$$
\Omega (s) = B \t A + \ii A \e^{2s\Lambda}\t A, \,\,s>0,
$$
where $\Lambda$ is a positive diagonal matrix, $A\in GL_n(\R)$ and $B\t A$ is symmetric. 
Then, from (\ref{metric}),
$$
 \ga_{\Om(s)} = \mtrx{ \t A^{-1}\e^{-2s\Lambda}A^{-1} & -\t A^{-1}\e^{-2s\Lambda}\t B \\
 -B\e^{-2s\Lambda}A^{-1}  & B\e^{-2s\Lambda}\t B + A\e^{2s\Lambda}\t A} .
$$
We see that, after rescaling the metric appropriately, 
as $s\to\infty$ at least $g$ dimensions collapse. In fact, the number of 
surviving dimensions equals the multiplicity of the largest eigenvalue of $\Lambda$.

Let us therefore consider the case when $\Lambda = \lambda 1$.
Then the rescaled metrics 
$ \e^{-2s\lambda}\ga_{\Om(s)}$ converge in the Gromov-Hausdorff sense,
\[
 \left( \T^{2g}, \e^{-2s\lambda} \ga_{\Om(s)} \right) \to
 (\T^g, A \t A ) .
\]

One may ask how theta divisors behave under this metric degeneration.
For the sake of simplicity, consider for instance the theta divisor $V(\th_\Om)$ for
level $k=1$. Let $u=\e^{2s\lambda}$. The absolute values of the terms of the
series defining $\th_{\Om(s)}$ are
\[
 a_m(y) = \e^{-\pi u \t(y-m)A\t A(y-m)},
 \quad m \in \Z^g.
\]
Call $m\in \Z^g$ a lattice neighbor of $y$ if
\[
 \forall m'\in\Z^g\setminus\{m\}: \quad
 \| y-m \|_{A\t A} \leq \| y-m' \|_{A\t A}.
\]
As $A\t A$ is positive definite, it is easy to see that for
any point $y_0$ which does not have at least two distinct lattice neighbors,
the theta function cannot equal zero for $s$ large enough and
any value of $x$, i.e.
\[
 y_0 \notin \mu(V(\th_{\Om(s)})) ,
\]
where $\mu$ stands for the group-valued moment map
\[
 \T^{2g}\ni(x,y)\mapsto \mu(x,y)=y\in\T^g.
\]
The theta divisor then approaches the tropical theta divisor of \cite{AN,MZ}
(see Section 5.2 of \cite{MZ}), defined by the non-smoothness locus of the functions
$$
 \max_{m\in\Z^{g}} \{ -\t(y-m)A\t A(y-m) \}
 \quad {\rm or} \quad
\max_{m\in\Z^{g}} \{ -\t mA \t Am +2\t yA \t Am \}.
$$
Note that this set depends only on the limit metric and on the location of the 
lattice points which define the Bohr-Sommerfeld fiber. This behaviour of theta
divisors as one approaches the boundary of $\DD_g$ is consistent with the fact 
that the theta function approaches a distributional section supported on the 
Bohr-Sommerfeld fiber, so that its zeroes are away from this fiber. 
\end{ex}

In general, the limiting behaviour of the theta divisor will not be related to 
the limit metric in such a simple manner as above. 

\begin{ex}
Let $s>0$. Considering a half-line of 
complex structures $\Om+s\dot{\Om}$ with $\Om,\dot{\Om}
\in\H_g$ and $s\to\infty$, the rescaled metrics
$\frac{1}{s} \ga_{\Om+s\dot{\Om}}$ converge in the Gromov-Hausdorff sense,
\[
 \left( \T^{2g}, \frac{1}{s} \ga_{\Om+s\dot{\Om}} \right) \to
 (\T^g, \dot{\Om}_1\dot{\Om}_2^{-1}\dot{\Om}_1+\dot{\Om}_2 ) .
\]
Note that the point on the boundary of $\DD_g$ we are approaching (or
the real polarization) is still the same, $\tau=-1$.

A computation analogous to the one in Example \ref{ex1} shows that the limit of the 
theta divisors is the same as above for the metric $\dot{\Om}_2$. This coincides with 
the limit metric only in the case $\dot{\Om}_1=0$, which is when 
the half-line is a (reparametrized) geodesic.
\end{ex}

\begin{ex}
For higher level $k>1$, a particular tropical theta divisor is given in an  
analogous way by the non-smoothness locus of
$$
y \mapsto
\max_{m\in\Z^{g}} \{ -\t m\dot{\Om}_2m +2k\t y\dot{\Om}_2m \}.
$$
This piecewise linear object, which is equidistant from Bohr-Sommerfeld fibers, 
is obtained by degeneration of the divisor of the theta 
function $\sum_{l\in \Z^g/k\Z^g} \sigma_\tau^l$, which is invariant by the 
subgroup $\Z^g/k\Z^g \subset H_k$ generated by the elements of the form $(1,(0,b))$.
(See Section \ref{further}.) Similar objects, the geometric quantization 
amoebas of \cite{BFMN}, appear in the geometric quantization of toric manifolds, 
where also such a particular tropical divisor, equidistant from Bohr-Sommerfeld points  
in the polytope, (asymptotically) selects a particular degenerating holomorphic section.  
\end{ex}

\vspace{2ex}

\textbf{Acknowledgements:} We wish to thank C.Florentino for useful 
discussions, J.Drumond Silva for help with the proof of 
Proposition \ref{intersection}
and C.Baier for help with the literature. We also wish to thank
the referee for important suggestions. 
Partially supported by the Center for Mathematical Analysis, Geometry and 
Dynamical Systems, by Funda\c{c}\~ao para a Ci\^encia e a Tecnologia through the 
Program POCI 2010/FEDER and by the Projects POCI/MAT/58549/2004 and
PPCDT/MAT/58549/2004. TB is also supported by the fellowship
SFRH/BD/22479/2005 of Funda\c{c}\~ao para a Ci\^encia e a Tecnologia.

%%%%%%%%%%%%%%%%%%%%%%%%%%%% References %%%%%%%%%%%%%%%%%%%%%%%

\end{document}